\definecolor{myblue}{rgb}{0.21, 0.34, 0.74}
\definecolor{mygrey}{rgb}{0.55, 0.57, 0.67}
\definecolor{myred}{rgb}{0.79, 0.0, 0.09}
\definecolor{JuliaRed}{RGB}{204, 52, 51}
\DeclareSymbolFont{symbolsC}{U}{pxsyc}{m}{n}
\DeclareMathSymbol{\medcircle}{\mathbin}{symbolsC}{7}
\crefname{equation}{}{} 
\numberwithin{equation}{section}
\newtheorem{theorem}{Theorem}[section]
\newtheorem{proposition}[theorem]{Proposition}
\newtheorem{lemma}[theorem]{Lemma}
\newtheorem{remark}[theorem]{Remark}
\newtheorem{corollary}[theorem]{Corollary}
\newtheorem*{question*}{Question}
\theoremstyle{definition}
\newtheorem*{definition*}{Definition}
\theoremstyle{remark}
\newcommand{\mb}{\mathbb}
\newcommand{\mbf}{\mathbf}
\newcommand{\msc}{\mathscr}
\newcommand{\on}{\operatorname}
\newcommand{\wh}{\widehat}
\newcommand{\edit}[1]{{\color{black}#1}}
\newcommand{\diff}{\, \mathrm{d}}
\renewcommand{\le}{\leqslant}
\renewcommand{\ge}{\geqslant}
\renewcommand{\leq}{\leqslant}
\renewcommand{\geq}{\geqslant}
\let\originalleft\left
\let\originalright\right
\renewcommand{\left}{\mathopen{}\mathclose\bgroup\originalleft}
\renewcommand{\right}{\aftergroup\egroup\originalright}
\newif\ifpublic
\newcommand{\ignore}[1]{}
\definecolor{MIT}{cmyk}{.24, 1.00, .78, .17}
\title{On the number of modes of Gaussian kernel density estimators}
\author{Borjan Geshkovski}
\affil{Inria \& Sorbonne Université}
\author{Philippe Rigollet}
\affil{MIT}
\author{Yihang Sun}
\affil{Stanford University}
\date{ \today }
\begin{document}
\setlist[itemize,enumerate]{left=0pt}

%
%

\maketitle

%
%

\begin{abstract}

We consider the Gaussian kernel density estimator with bandwidth $\beta^{-\frac12}$ of $n$ iid Gaussian samples. Using the Kac-Rice formula and an Edgeworth expansion, we prove that the expected number of modes on the real line scales as $\Theta(\sqrt{\beta\log\beta})$ as $\beta,n\to\infty$ provided $n^c\lesssim \beta\lesssim n^{2-c}$ for some constant $c>0$. An impetus behind this investigation is to determine the number of clusters to which Transformers are drawn in a metastable state.
			
\bigskip

\noindent \textbf{Keywords.}\quad Kernel density estimator, Kac-Rice formula, Edgeworth expansion, self-attention, mean-shift.

\medskip

\noindent \textbf{\textsc{ams} classification.}\quad \textsc{62G07, 60G60, 60F05, 68T07}.
\end{abstract}
	
\thispagestyle{empty}

\setcounter{tocdepth}{2}
\tableofcontents

%
%

\section{Introduction}

\subsection{Setup and main result}
\label{sec:setup}

For $\beta >0$ and $X_1, \dots, X_n \stackrel{\text{iid}}{\sim} N(0, 1)$, the \emph{Gaussian kernel density estimator (KDE)} with bandwidth $h=\beta^{-\frac12}$ is defined as
\begin{equation}\label{eq:gkde}
\wh{P}_n(t)\coloneqq \frac{1}{n}\sum_{i=1}^n \mathsf{K}_{h}\ast\delta_{X_i}(t) =  \frac{\sqrt{\beta}}{n\sqrt{2\pi}}\sum_{i=1}^n e^{-\frac{\beta}{2}(t-X_i)^2}, \hspace{1cm} t\in\mb{R}.
\end{equation}	
Here, ``Gaussian'' refers to the choice of kernel $\mathsf{K}_h$.

In this paper we are interested in determining the expected number of modes (local maxima) of $\wh{P}_n$ over $\mb{R}$. 
While this is a classical question, addressed in even more general settings  than \eqref{eq:gkde}---such as non-Gaussian kernels, compactly supported samples, and higher dimensions \cite{mammen91,mammen95,mammen97}---a definite answer has not been given in the literature. 
Indeed, the best-known results fall into one of two settings: either considering samples drawn from a compactly supported density  (instead of $N(0,1)$ as done here), or counting the modes within a fixed compact interval. 
In the special case of the Gaussian KDE \eqref{eq:gkde}, one has in the latter setting for instance

\begin{theorem}[{\!\cite[Thm.~1]{mammen95}}]
\label{thm:mammen}
Let $\wh{P}_n$ be the Gaussian KDE defined in \eqref{eq:gkde}, with bandwidth $h\coloneqq\beta^{-\frac12}>0$, of $X_1, \dots, X_n \stackrel{\text{iid}}{\sim} N(0, 1)$. 
Asymptotically as $n\to\infty$, the expected number $N$ of modes of $\wh{P}_n$ in a fixed interval $[a, b]$ is 
\begin{itemize}\edit{
    \item $\mbf{1}\{0\in [a, b]\}+o(1)$ if $\beta\ll n^{\frac25}$,
    \item $\Theta(1)$ if $\beta\asymp n^{\frac25}$
    \item $\Theta\left(n^{-\frac12}\beta^{\frac54}\right)=o\left(\sqrt{\beta}\right)$ if $n^{\frac25}\ll \beta\ll n^{\frac23}$,
    \item and $\Theta\left(\sqrt{\beta}\right)$ if $n^{\frac23}\lesssim \beta \ll n^2/\log^6 n$.}
\end{itemize}
\end{theorem}

In \cite{mammen91, mammen95, mammen97}, the authors additionally conduct more refined casework on the bandwidth to provide more precise estimates, such as pinpointing the leading constants. In fact, \cite{mammen91} \emph{does} count modes in \(\mathbb{R}\), but the underlying distribution of the samples \(X_i\) is supported on a closed interval (thus excluding $N(0, 1)$), so there are no modes outside the interval anyway.

\edit{
In the case of counting modes of \eqref{eq:gkde} over $\mb{R}$, \cite{carreira2003number} provides an upper bound of $n$ using scale-space theory by showing adding components one-by-one to a Gaussian mixture increases the mode-count by at most one each time. Our main result velow provides a precise answer in this case.
For the sake of clarity, we stick to the regime where 
\[2\log n-\log \beta \asymp \log\beta\asymp \log n.\] 
Through refined computations, one can determine the modes in the regime $1 \ll \beta \lesssim n^2/\log^{\Theta(1)}(n)$ and also pinpoint the leading constant. We state our main theorem, and will comment on how to do expand the regime in appropriate places. 
}
\begin{theorem}\label{thm:main-result}
Let $\wh{P}_n$ be the Gaussian KDE defined in \eqref{eq:gkde}, with bandwidth $\beta^{-\frac12}$, of $X_1, \dots, X_n \stackrel{\text{iid}}{\sim} N(0, 1)$. Suppose $n^c\lesssim \beta \lesssim n^{2-c}$ for arbitrarily small $c>0$.
Then asymptotically as $n,\beta\to\infty$,
\begin{enumerate}
    \item In expectation over $X_i$, the number of modes of $\wh{P}_n$ is $\Theta\left( \sqrt{\beta\log \beta} \right)$. 
\item Almost all modes lie in two intervals of length $\Theta\left(\sqrt{\log\beta}\right)$---namely, the expected number of modes $t\in\mb{R}$, such that $t^2\not\in \left[2\log n-3\log\beta,2\log n-\log\beta\right]$, is $o\left(\sqrt{\beta\log\beta}\right)$.
\end{enumerate}
\end{theorem}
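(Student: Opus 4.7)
The plan is to apply the Kac--Rice formula to the random function $\wh{P}_n'$, restricting to those zeros at which $\wh{P}_n''<0$ in order to isolate local maxima. This yields the master identity
\begin{equation*}
\mathbb{E}[\#\text{modes of } \wh{P}_n] = \int_{\mathbb{R}} \mathbb{E}\bigl[\max(-\wh{P}_n''(t),\,0) \,\bigm|\, \wh{P}_n'(t) = 0\bigr]\, p_{\wh{P}_n'(t)}(0)\, dt.
\end{equation*}
Since $\wh{P}_n'(t)$ and $\wh{P}_n''(t)$ are empirical means of $n$ iid bounded random variables (of magnitude $O(\beta)$ and $O(\beta^{3/2})$, respectively), the task reduces to obtaining sharp asymptotics, uniformly in $t$, for the density $p_{\wh{P}_n'(t)}(0)$ and the conditional expectation above, which I will extract from a joint local Edgeworth expansion of $(\wh{P}_n'(t), \wh{P}_n''(t))$.

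First I will compute the relevant moments. Writing $\phi_\beta(u) = \sqrt{\beta/(2\pi)}\, e^{-\beta u^2/2}$, one has $\mathbb{E}[\phi_\beta^{(k)}(t - X_1)] = (\phi_\beta \ast \phi)^{(k)}(t)$, which agrees with $\phi^{(k)}(t)$ up to an $O(1/\beta)$ correction, while Gaussian integration shows the per-sample variances of $\phi_\beta'(t-X_1)$ and $\phi_\beta''(t-X_1)$ are of order $\beta^{3/2}\phi(t\sqrt{\beta/(\beta+2)})$ and $\beta^{5/2}\phi(t\sqrt{\beta/(\beta+2)})$, so that $\mathrm{Var}(\wh{P}_n'(t))$ and $\mathrm{Var}(\wh{P}_n''(t))$ are these divided by $n$. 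A parallel computation yields their covariance and hence the conditional mean and variance of $\wh{P}_n''(t)$ given $\wh{P}_n'(t) = 0$. An Edgeworth expansion applied to the iid sum $\wh{P}_n'(t) = \frac{1}{n}\sum_i \phi_\beta'(t - X_i)$ then approximates $p_{\wh{P}_n'(t)}(0)$ by the Gaussian density with matching mean and variance, with corrections decaying as a power of $n^{-1/2}$.

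Next, I identify the critical window by balancing $|\mathbb{E}[\wh{P}_n'(t)]| \approx |t|\phi(t)$ against $\mathrm{sd}(\wh{P}_n'(t)) \sim \beta^{3/4}\sqrt{\phi(t)/n}$: the Gaussian density at $0$ is of order $1/\mathrm{sd}$ precisely when $t^2\phi(t)\,n/\beta^{3/2} \lesssim 1$, which after taking logarithms carves out exactly $t^2 \in [2\log n - 3\log\beta,\, 2\log n - \log\beta]$. Outside this window the density is exponentially suppressed, and a straightforward tail estimate shows this region contributes $o(\sqrt{\beta\log\beta})$, proving the second conclusion. Inside the window the conditional mean $\mathbb{E}[\wh{P}_n''(t)\mid \wh{P}_n'(t)=0]$ is small compared to $\mathrm{sd}(\wh{P}_n''(t)) \sim \beta^{5/4}\sqrt{\phi(t)/n}$ (the standardized ratio is of order $|t|/\sqrt{\beta} \to 0$), so $\mathbb{E}[(-\wh{P}_n''(t))_+ \mid \wh{P}_n'(t) = 0]$ equals $\mathrm{sd}(\wh{P}_n''(t))/\sqrt{2\pi}$ up to lower-order terms. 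The Kac--Rice integrand inside the window is therefore of size $\Theta(\sqrt{\beta})$, and integration over the window, whose length in $t$ is $\Theta(\sqrt{\log\beta})$ under the hypothesis $n^c \lesssim \beta \lesssim n^{2-c}$, yields the claimed $\Theta(\sqrt{\beta\log\beta})$ count.

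The main obstacle, and where \Cref{ass: assumption.1} together with the two-sided polynomial constraint $n^c \lesssim \beta \lesssim n^{2-c}$ enter, is verifying that the Edgeworth expansion holds uniformly in $t$ across the entire critical window. The summand $\phi_\beta'(t-X_1)$ has a law depending on $t$ that concentrates on an event of probability $\sim \phi(t)/\sqrt{\beta}$ in the tail regime of interest, so the Lyapunov-type ratio of third to second moments grows polynomially in $\beta$; the condition $n^c \lesssim \beta \lesssim n^{2-c}$ is precisely what keeps this ratio $n^{-\varepsilon}$-small for some $\varepsilon > 0$, so the Edgeworth remainder is negligible compared to the leading Gaussian term throughout the window. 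Carefully patching the joint expansion to justify the conditional-moment asymptotic, and combining with the tail bounds outside the window to rule out stray contributions, will finish the argument.
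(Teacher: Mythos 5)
Your Kac--Rice/Edgeworth analysis on the interval $T$ (the paper's \eqref{eq:T}) follows the paper's route and is essentially correct: compute moments, standardize the joint law of $(\wh{P}_n'(t),\wh{P}_n''(t))$, replace it by a Gaussian, show via Edgeworth that the remainder is negligible uniformly over $T$ under $n^c\lesssim\beta\lesssim n^{2-c}$, and integrate the Gaussian Kac--Rice density to get $\Theta(\sqrt{\beta\log\beta})$. But there is a genuine gap in how you handle the region outside $T$, and it is the hardest part of the problem.

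You write that the balance $t^2\phi(t)n/\beta^{3/2}\lesssim 1$ ``carves out exactly $t^2\in[2\log n-3\log\beta,\,2\log n-\log\beta]$,'' but after taking logarithms this condition gives only the one-sided constraint $t^2\gtrsim 2\log n-3\log\beta$; it gives no upper bound on $t^2$. In particular, the Gaussian density of $\wh{P}_n'(t)$ at $0$ is \emph{not} exponentially suppressed for $t^2>2\log n-\log\beta$: there the mean-to-standard-deviation ratio is even smaller, so the density at $0$ is near its maximum $\sim 1/\mathrm{sd}$, and the Gaussian Kac--Rice integrand stays of order $\sqrt\beta$ all the way out to $t\to\infty$ (in the paper's notation, $A_t\to 0$ so $e^{-A_t}\to1$). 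If you believed the Gaussian approximation there, the integral over $\mb{R}\setminus T$ would diverge, which is absurd since the KDE has at most finitely many modes. The resolution is that the Gaussian (and indeed the entire Edgeworth) approximation breaks down for $|t|$ beyond $T$: the third-order term overwhelms the leading one (\cref{rmk:delicate}), so no tail estimate of the kind you invoke can be extracted from the Kac--Rice integral there. The upper endpoint of your window comes from the failure of the asymptotic expansion, not from a mean/variance balance, and this is not cosmetic.

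The paper resolves this with a non-perturbative argument of a completely different flavor: by scale-space monotonicity for Gaussian mixtures (\cref{lem:scale-space}, based on \cite[Theorem~2]{carreira2003number}), the number of modes of $\wh{P}_n$ in $(a,\infty)$ is at most the number of samples $X_i\ge a$, and a Gaussian tail bound on $\#\{i:X_i\notin T\}$ then gives $\mb{E}U_0(F_n,\mb{R}\setminus T)\lesssim \sqrt{\beta\exp(\omega(\beta))}=o(\sqrt{\beta\log\beta})$. This step is crucial, uses the specific structure of the Gaussian kernel (it fails for general kernels, as noted in \cref{rmk:gkde}), and has no analogue in your proposal. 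Without it, your argument only controls the region where the Edgeworth expansion is valid and cannot conclude either part of \cref{thm:main-result}.
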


\begin{figure}
    \centering
    \includegraphics[scale=0.24]{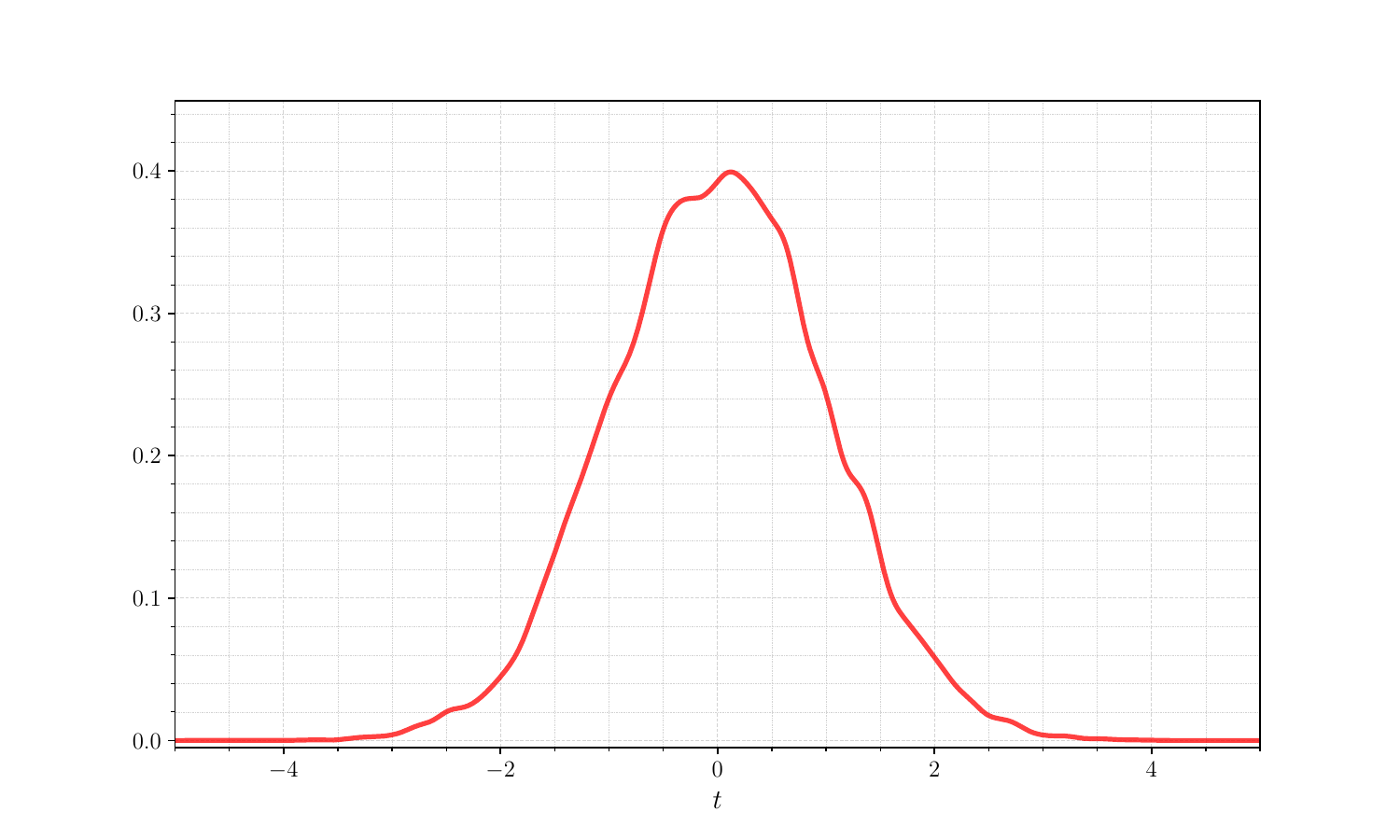}
    \includegraphics[scale=0.24]{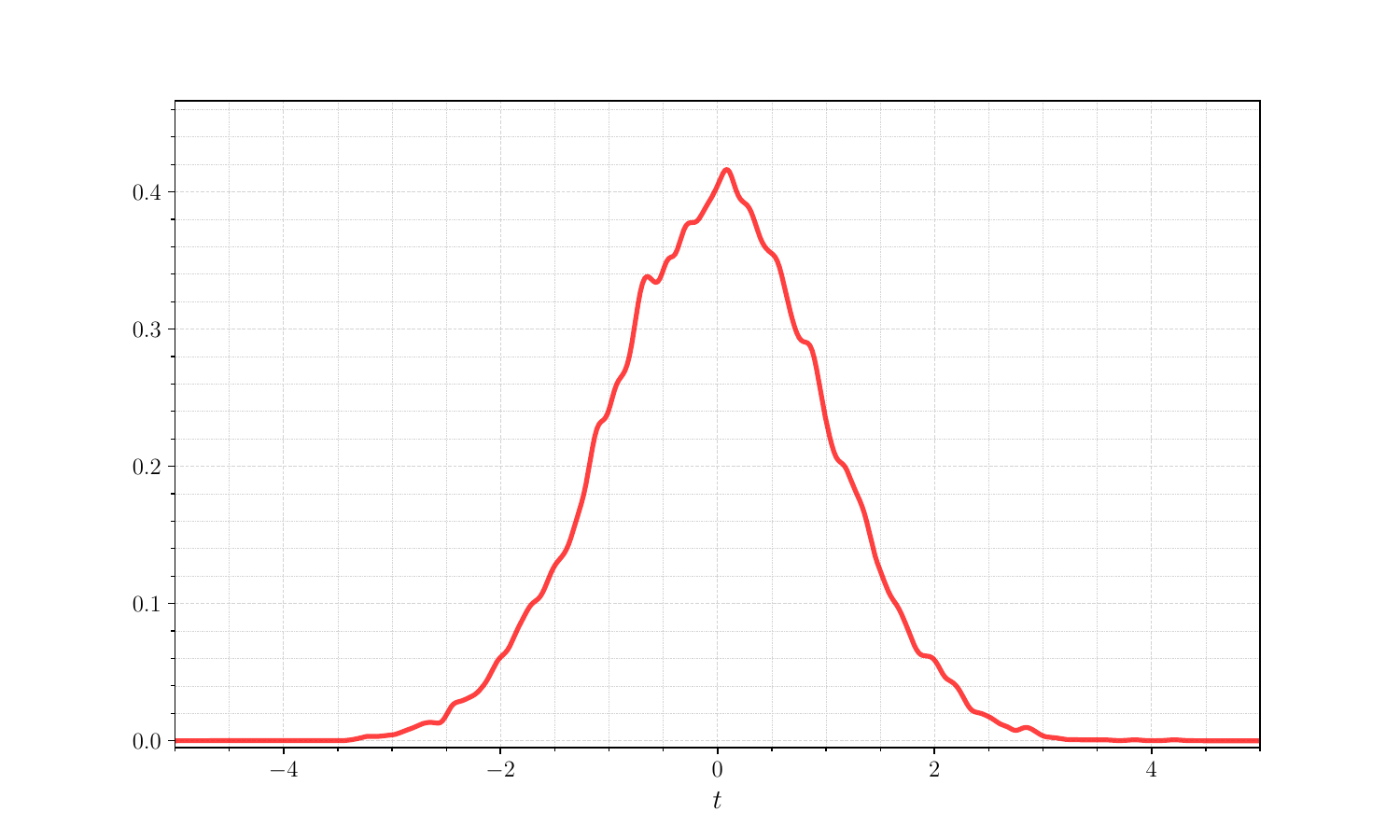}
    \caption{\edit{A realization of the kernel density estimator $\widehat P_n$ in \eqref{eq:gkde} for $n=10^4$, with $\beta=100$ (left) and $\beta=300$ (right). Larger $\beta$ narrows the Gaussian kernel, which sharpens $\widehat P_n$ and reveals more small peaks on the shoulders, while the central peak remains single. \Cref{thm:main-result} later quantifies where and how many such peaks appear.}}
    \label{fig: kde}
\end{figure}

\edit{In fact, we can bound on the rate of convergence of the little-$o$ in Point 2. This is spelled-out in \cref{prop:main-int,prop:main-tail}.} Several comments are in order.

\begin{remark}
    \label{rem:minimax}
    \begin{itemize}
        \item To better appreciate the range of values for $\beta$ in this theorem as well as subsequent ones, we use minimax theory as a benchmark; see, e.g.,~\cite{Tsy09}.  The reparametrization $h=\beta^{-\frac12}$ is motivated by the connection to the Transformer model described in \Cref{sec:transformers}. Using an optimal bias-variance tradeoff~\cite[Chapter~1]{Tsy09}\footnote{\edit{With $h=\beta^{-\frac12}$, the usual bias–variance calculus for $s$-smooth densities gives $h\asymp n^{-\frac{1}{2s+1}}$ and hence $\beta\asymp n^{\frac{2}{2s+1}}$ ~\cite[Ch. 1]{Tsy09}}.}, we see that the optimal scaling of the bandwidth parameter $h$ depends on the smoothness of the underlying density of interest: if the underlying density has $s$ bounded (fractional) derivatives, then the optimal choice of $h$ is given by $h \asymp n^{-\frac{1}{2s+1}}$. This gives $\beta \asymp n^\frac{2}{2s+1}$. For $s>0$, we get $\beta \in [n^{c}, n^{2-c}]$ for some $c>0$. In particular, the transition of the number of modes from 1 to $\sqrt{\beta}$ in \Cref{thm:mammen} is achieved for $\beta \approx  n^{\frac23}$, which is the optimal choice for Lipschitz densities. The  message of our main Theorem~\ref{thm:main-result} above is that this scaling in $\sqrt{\beta}$ is the prevailing one for the whole range $\beta \in [n^{c}, n^{2-c}]$ if one does not restrict counting modes in a bounded interval $[a,b]$.
        \item Point 2. in \Cref{thm:main-result} shows that most of the modes are at distance at least $C\log n$ from the origin provided $\beta > n^{\frac{2-C}{3}}$ for $C>0$ small. This corresponds to a choice of a bandwidth adapted to smoothness $s<1$. This result is in agreement with and completes the picture drawn by \Cref{thm:mammen}.
    \end{itemize}
\end{remark}

\begin{remark}
\label{rmk:empirical}
We further motivate Point 2. in \cref{thm:main-result} by considering a qualitative picture of the distribution of the modes displayed in \Cref{figure:approx}. 
\begin{itemize}
    \item Near the origin, we find most of the samples $X_i$ and they are densely packed in the shape of a Gaussian. The corresponding Gaussian summands in \cref{eq:gkde} cancel to create one mode, as shown already in \cref{thm:mammen}.
    
    \item In the two intervals of length $\Theta\left(\sqrt{\log \beta}\right)$, the samples $X_i$ are separated enough that the corresponding Gaussian summands do not cancel, but rather form $\Omega\left(\sqrt{\log\beta}\right)$ \emph{isolated bumps}, as discussed in more generality in \cite[Section 9.3]{devgyo}. 
    
    \item Further away at the tails, the phenomena of isolated bumps occur, but there are so few samples $X_i$ that the number of modes created is a negligible fraction.
    \end{itemize}

\edit{Write $\widehat P_n'(t)=\mathbb E\,\widehat P_n'(t)+
(\widehat P_n'(t)-\mathbb E\widehat P_n'(t))$.
The first term (“bias’’) reflects the deterministic drift toward a single broad mode, while the second 
(“variance’’) creates random sign changes that generate extra modes. For the rescaled field 
$F_n(t)=-c\,\widehat P_n'(t)$ in \eqref{eq:Fn}, \Cref{lem:moments-p} yields 
\[
\mathrm{SNR}(t)^2\coloneqq
\frac{|\mathbb E F_n(t)|^2}{\mathrm{Var}F_n(t)}
\asymp n t^2\beta^{-\frac32} e^{-\frac{t^2}{2}}.
\]
When $\mathrm{SNR}(t)\gg1$ the bias dominates and no additional modes appear; when 
$\mathrm{SNR}(t)\ll1$ the variance takes over and modes proliferate. The crossover 
$\mathrm{SNR}(t)\approx1$ gives the inner edge $t^2\approx 2\log n-3\log\beta$ (up to $\log t$ terms). 
On the other hand, to form isolated bumps we also need at least one point in a kernel window of 
width $h=\beta^{-\frac12}$, i.e. $n\varphi(t)h\approx 1$, which gives the outer edge 
$t^2\approx 2\log n-\log\beta$. Together these two thresholds select the belt 
$t^2\in[2\log n-3\log\beta,2\log n-\log\beta]$ and, in particular, center the localization at 
$|t|\approx\sqrt{2\log n-\log\beta}$.}

We revisit this discussion and \Cref{figure:approx} in \cref{rmk:delicate}.
\end{remark}

\edit{
\begin{remark}[Belt width] From \Cref{sec: 2.3} the Kac–Rice density is proportional to
$\sqrt{\beta}e^{-A_t}$ with $A_t \asymp \beta^{-\frac32}n t^2 e^{-\frac{t^2}{2}}$, so the mass concentrates
where $A_t=O(1)$. This pins down $t^2 = 2\log n - c\log\beta + O(\log\log\beta)$ with
$c\in\{1,3\}$—precisely the endpoints of \Cref{thm:main-result}—and converting from $t^2$ to $t$ turns the
$2\log\beta$ gap into a belt of length $\Delta t \approx (2t_*)^{-1}\cdot 2\log\beta \asymp \sqrt{\log\beta}$
around $t_*\approx\sqrt{2\log n - \log\beta}$.
\end{remark}
}

\edit{\begin{remark}
We compare our result with \cref{thm:mammen}.
Let $J$ be the union of the symmetric intervals of length $\Theta(\sqrt{\log\beta})$ in Point 2, i.e. the ``two belts’’ region where almost all modes lie. Our proof will show that the density of modes is $\Theta(\sqrt{\beta})$ whenever $t\in J$, and $o(\sqrt{\beta})$ whenever $t\not\in J$. This gives the \cref{thm:main-result} upon integrating over $t$, and explains the threshold $\beta\asymp n^{\frac23}$ in \cref{thm:mammen}: 
\begin{itemize}
\item If $\beta\gtrsim n^{\frac23}$, then $[a, b]\subset J$ for sufficiently large $n$ and $\beta$, so the density of modes is $\Theta(\sqrt{\beta})$ everywhere on $[a, b]$, giving $\Theta(\sqrt{\beta})$ modes in total.
\item If $\beta\ll n^{\frac23}$, then $[a, b]$ is between (and outside of) the two belts of $J$ for sufficiently large $n$ and $\beta$, so the density of modes is $o(\sqrt{\beta})$ everywhere on $[a, b]$, giving $o(\sqrt{\beta})$ modes in total. In fact, \cref{prop:main-int} shows this symmetric interval $T'$ between the two belts has $O(\sqrt{\beta})$ modes in total. We can see that $T'$ has length $\omega_{n\to\infty}(1)$ and the mode density is increasing as we move away from $0$, so the number of modes in $[a, b]$ must be a $o(1)$-fraction of the modes in $T'$, i.e. it is $o(\sqrt{\beta})$.
\end{itemize}
Hence, this corollary of our result implies the last two bullet points of \cref{thm:mammen}. Similarly, by truncating to more refined intervals separated by $t^2 = 2\log n - 5\log \beta$, we can hope to recover the threshold $\beta\asymp n^{\frac25}$ given in the first three bullet points of \cref{thm:mammen}, but we do not pursue this here.
\end{remark}}

\begin{figure}[!ht]
    \centering
    \includegraphics[scale=0.285]{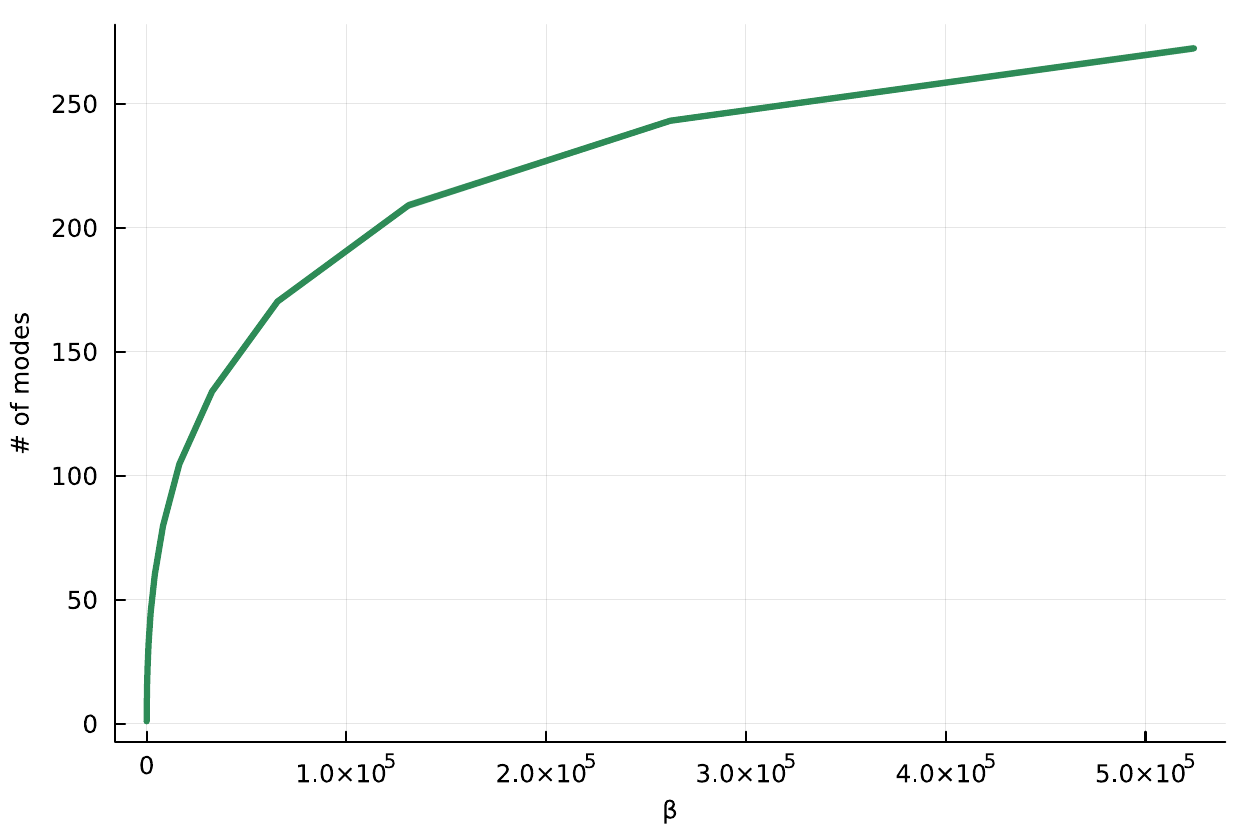}
    \hspace{0.1cm}
    \includegraphics[scale=0.285]{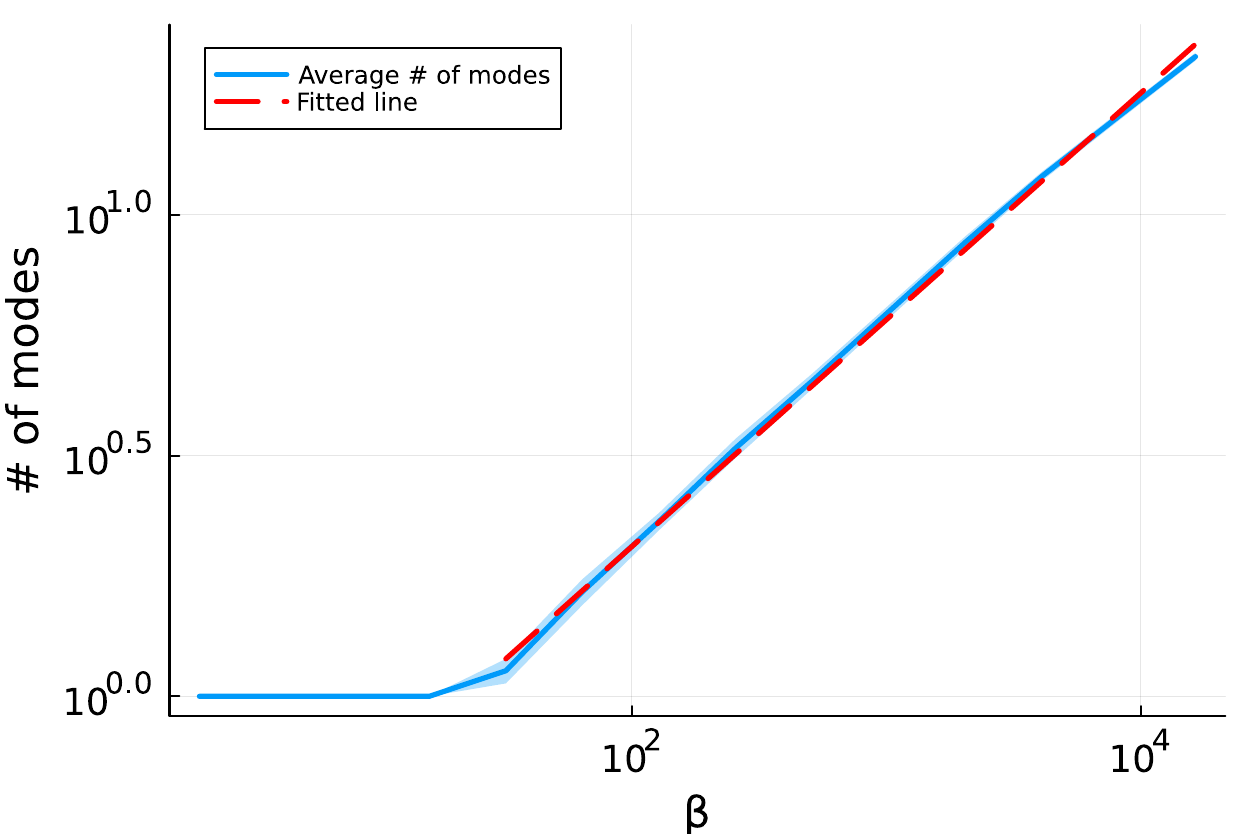}
    \includegraphics[scale=0.285]{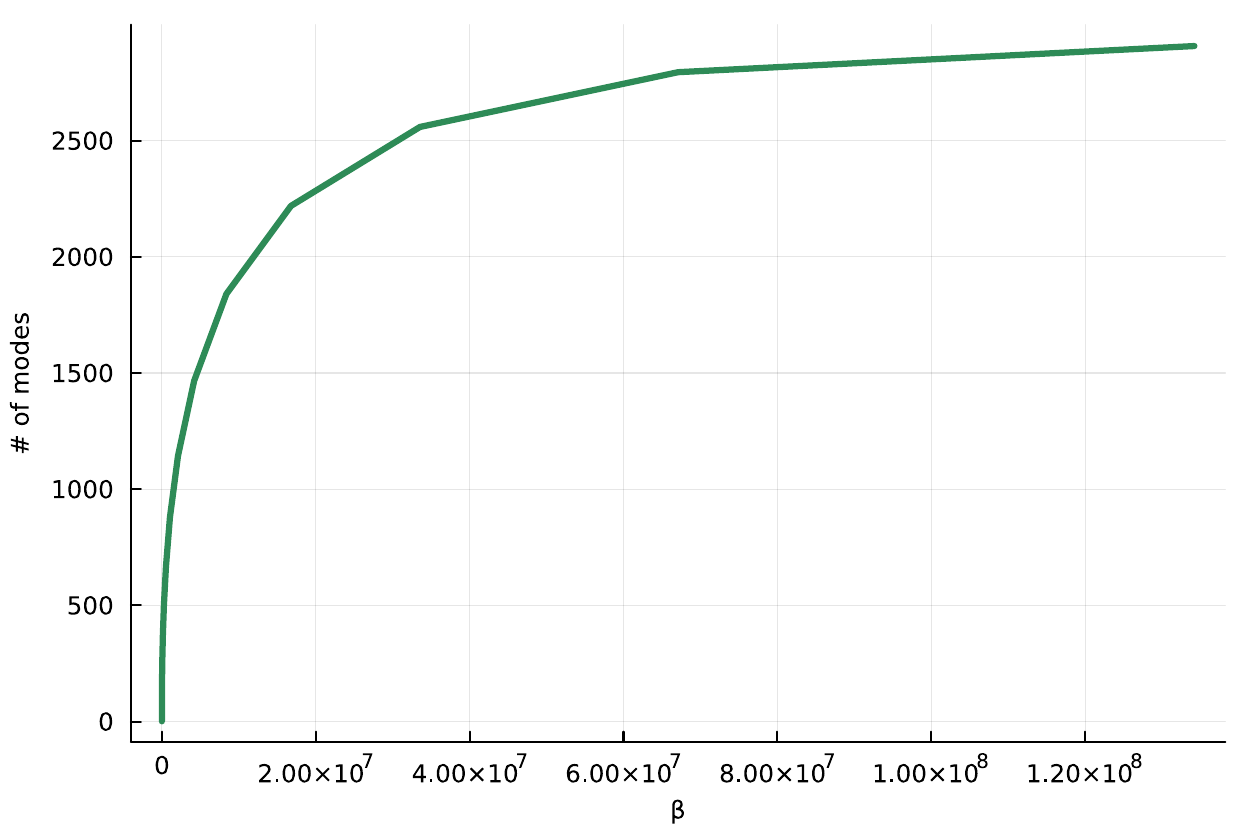}
    \hspace{0.1cm}
    \includegraphics[scale=0.285]{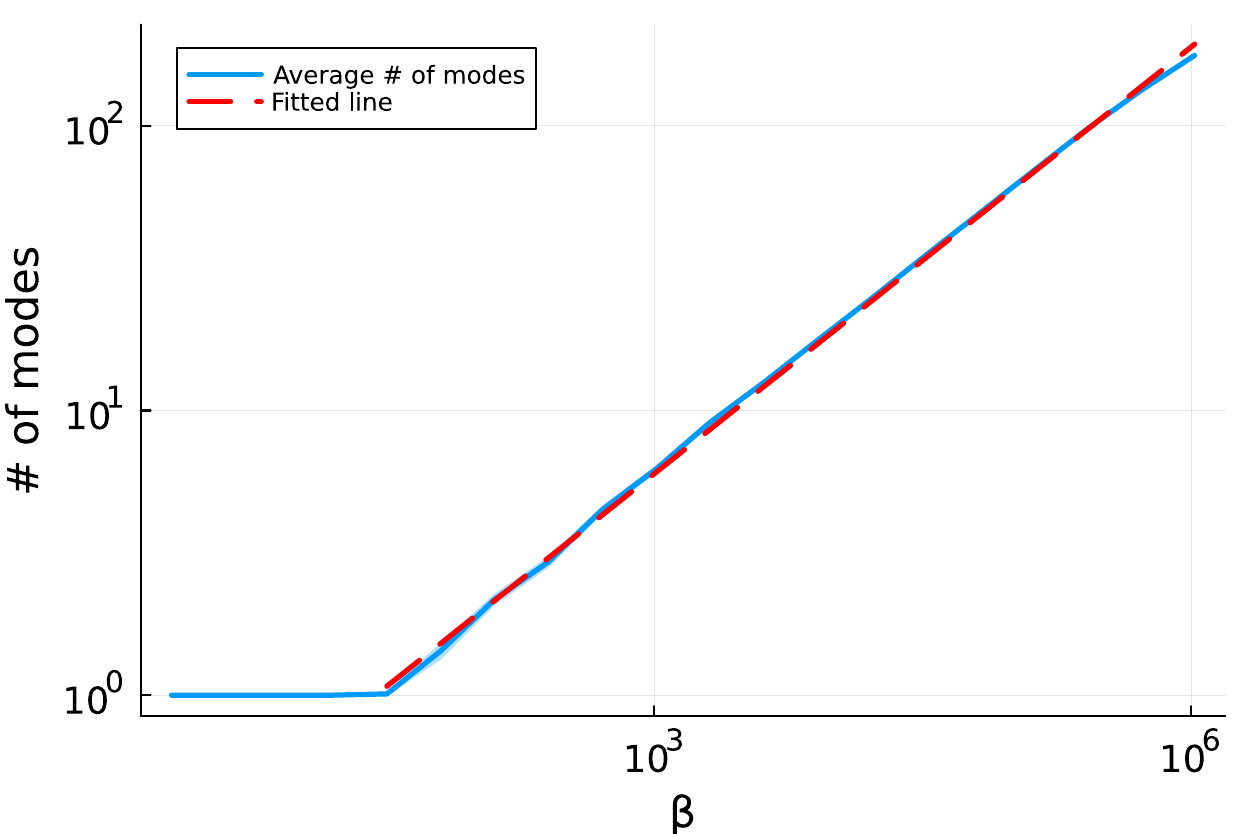}
    \caption{(Left) Plot of the average number of modes as a function of $\beta$ for $n=10^3$ (top) and $n=10^4$ (bottom). (Right) Log-log plot for $n=10^3$ (top) and $n=10^4$ (bottom); the predicted linear regression line ({\color{JuliaRed}red}) corroborates a power-law of the form $\text{average \# of modes} \approx 0.179 \cdot \beta^{0.504}$, in line with \Cref{thm:main-result}.}
    \label{fig: 1}
\end{figure}

\subsection{Motivation}
\label{sec:transformers}

The question of estimating the number of modes as a function of the bandwidth has a plethora of applications in statistical inference and multimodality tests---see \cite{mammen91, mammen95, mammen97} and the references therein. 
Another application which has stimulated some of the recent progress on the topic is data clustering. 
The latter can be achieved nonparametrically using a KDE, whose modes, and hence clusters, can be detected using the \emph{mean-shift algorithm} \cite{fukunaga1975estimation, cheng1995mean, comaniciu2002mean, carreira2000mode, carreira2003number, carreira2007gaussian, rodriguez2014clustering, carreira2015review}, which can essentially be seen as iterative local averaging. The main idea in mean-shift clustering is to perform a mean-shift iteration starting from each data point and then define each mode as a cluster, with all points converging to the same mode grouped into the same cluster. The analysis of this algorithm has led to upper bounds on the number of modes of \eqref{eq:gkde} \cite{carreira2003number}.

\begin{figure}[!ht]
    \centering
    \includegraphics[scale=0.425]{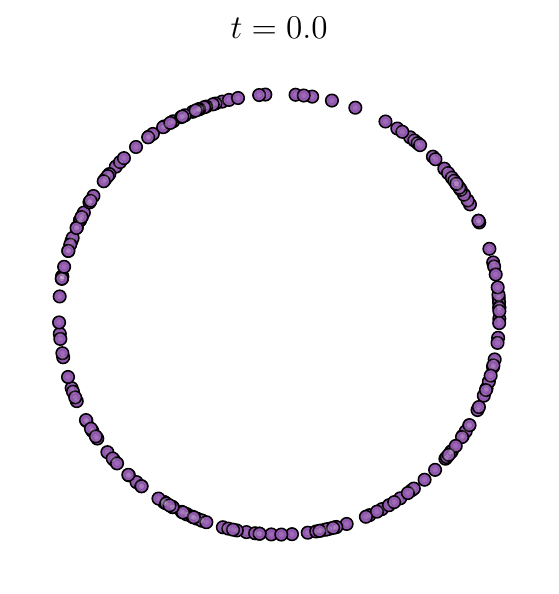}
    \includegraphics[scale=0.425]{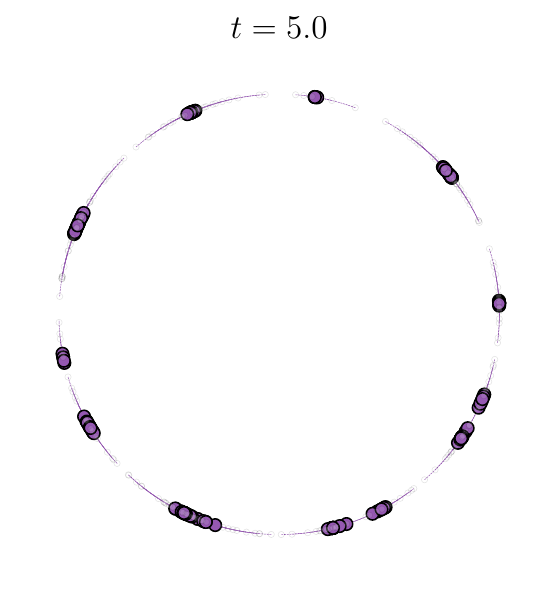}
    \includegraphics[scale=0.425]{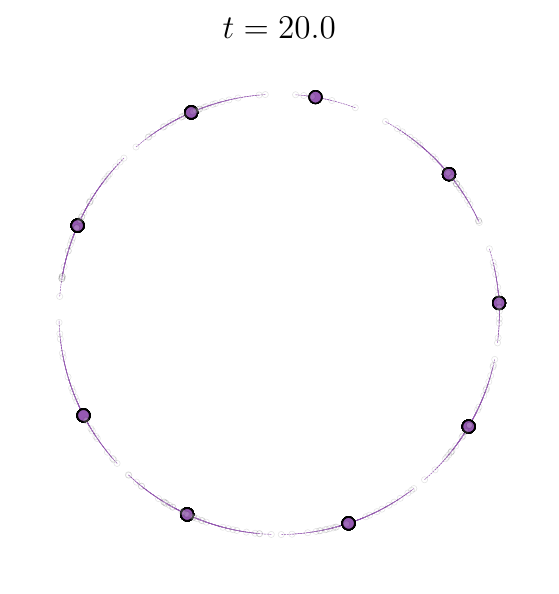}
    \caption{Metastability of self-attention dynamics at temperature $\beta=81$ initialized with $n$ iid uniform points on the circle, with $n=200$ (top) and $n=1000$ (bottom). The number of clusters appears of the correct order $\sim\sqrt{\beta}$. (Code available at \href{https://github.com/borjanG/2023-transformers-rotf}{\color{blue}github.com/borjanG/2023-transformers-rotf}.)}
    \includegraphics[scale=0.425]{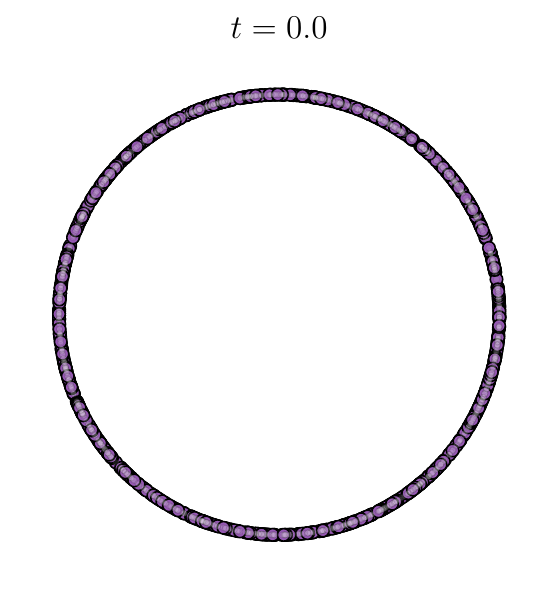}
    \includegraphics[scale=0.425]{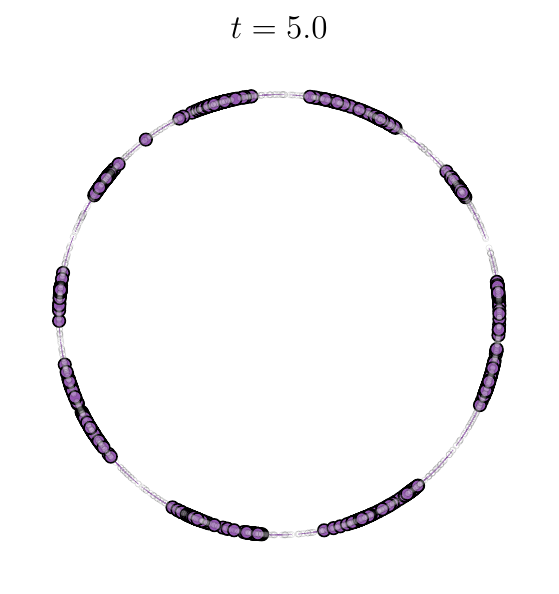}
    \includegraphics[scale=0.425]{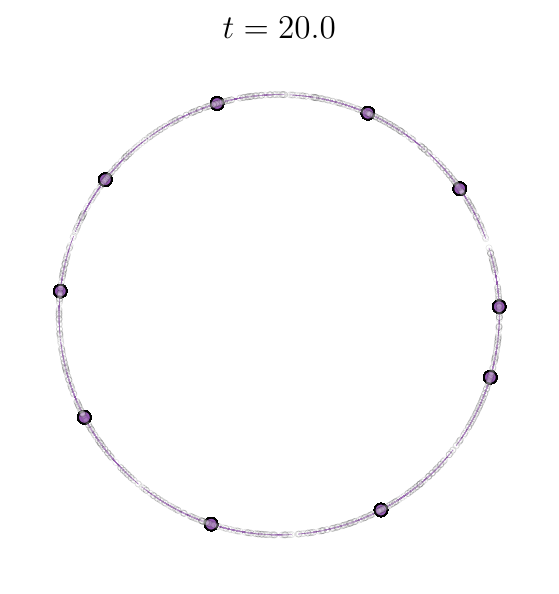}
    \label{fig: 2}
\end{figure}

We were instead brought to this problem from another perspective, motivated by the study of \emph{self-attention dynamics} \cite{sander2022sinkformers, geshkovski2023mathematical, geshkovski2024emergence, geshkovski2024measure}---a toy model for \emph{Transformers}, the deep neural network architecture that has driven the success of large language models \cite{vaswani2017attention}. These dynamics form a mean-field interacting particle system
\begin{equation*}
    \frac{\diff}{\diff\tau}x_i(\tau) = \sum_{j=1}^n \frac{e^{\beta\langle x_i(\tau), x_j(\tau)\rangle}}{\displaystyle \sum_{k=1}^n e^{\beta\langle x_i(\tau), x_j(\tau)\rangle}} \mathsf{P}_{x_i(\tau)}^\perp(x_j(\tau)),
\end{equation*}
evolving on the unit sphere $\mathbb{S}^{d-1}$ because of $\mathsf{P}_x^\perp \coloneqq I_d-xx^\top$. Here, $\tau\ge0$ plays the role of layers, the $n$ particles $x_i(\tau)$ represent tokens evolving through a dynamical system. This system is characterized by a temperature parameter $\beta \ge 0$ that governs the space localization of particle interations.
One sees that all particles move in time by following the field 
$\nabla \log (\mathsf{K}_{\beta^{-1/2}} \ast \mu_\tau)$; here, $\mu_\tau$ is the empirical measure of the particles $x_1(\tau), \ldots, x_n(\tau)$ at time $\tau$.

It is shown that for almost every initial configuration $x_1(0),\ldots,x_n(0)$, and for $\beta \ge 0$, 
all particles converge to a single cluster in infinite time~\cite{geshkovski2023mathematical, criscitiello2024synchronization, polyanskiy2025synchronization}. Rather than converging quickly, \cite{geshkovski2024dynamic} prove that the dynamics instead manifest \emph{metastability}: particles quickly approach a few clusters, remain in the vicinity of these clusters for a very long period, and eventually coalesce into a single cluster in infinite time. Concurrently, and using different methods,  \cite{bruno2024emergence} show a similar result: starting from a perturbation of the uniform distribution, beyond a certain time, the empirical measure of the $n$ particles approaches an empirical measure of $O(\sqrt{\beta})$-equidistributed points on the circle in the mean-field limit, and stays near it for long time. This is done by a study of the linearized system and leveraging nonlinear stability results from \cite{grenier2000nonlinear}. See also \cite{bruno2025multiscale, karagodin2024clustering, alcalde2025attention}.

\edit{
Our interest lies in the number of clusters during the first metastable phase in dimension $d=2$.
At time $\tau=0$, we initialize $n$ tokens at iid uniform points on the circle. Under the 
self-attention flow, tokens follow the vector field $\nabla\log\big(K_{\beta^{-1/2}} * \mu_\tau\big)$, 
so metastable clusters coincide with local maxima of the smoothed empirical measure 
$K_{\beta^{-1/2}} * \mu_\tau$. In particular, at early times the circle is partitioned by the 
stationary points of $K_{\beta^{-1/2}} * \mu_0$, and each arc contracts toward its nearest maximum, 
making the number of clusters equal to the number of these maxima. 
Our $1d$ analysis shows that, for iid Gaussian data, the maxima concentrate in two symmetric belts where $t^2 \in [2\log n - 3\log\beta,2\log n - \log\beta]$; on the circle this 
corresponds to angular locations where the local inter-token spacing is $\simeq \beta^{-\frac12}$, 
explaining both the $\sqrt{\beta}$ scaling of the metastable cluster count and their preferred 
positions.}

Here, we focus on a simplified setting by working on the real line instead of the circle (or higher-dimensional spheres), but we believe the analysis could be extended to these cases pending technical adaptations. Notwithstanding, \Cref{thm:main-result} reflects what is seen in simulations (\Cref{fig: 2}).

\subsection{Sketch of the proof} \label{sec: sketch}

The spirit of the proof of results such as \Cref{thm:mammen} and others presented in \cite{mammen91,mammen95,mammen97} is similar to ours---one applies the Kac-Rice formula (\Cref{thm:kac-rice}) to a Gaussian approximation of $(\wh{P}_n'(t), \wh{P}_n^{''}(t))$ and argues its validity. However, the main limitation of these works is that modes are counted in a fixed and finite interval $[a, b]$ (and $[0,1]^d$ in the higher dimensional cases). Extending these techniques to the whole real line demands for different, significantly stronger, approximation results using Edgeworth expansions.

We sketch the key ideas that allow us to count modes over the real line. 
\edit{We truncate $\mb{R}$ to the interval
\begin{equation}\label{eq:T}
T\coloneqq\left[-\sqrt{2\log n-\log\beta-\omega(\beta)}, \sqrt{2\log n-\log\beta-\omega(\beta)}\right]
\end{equation}
where $\omega$ is a fixed, slow growing function such that 
$$1\ll \omega(\beta)\ll \log\log \beta,$$ 
and so $T$ is well-defined for large $\beta$.
Motivated by \cref{thm:mammen,thm:main-result}, we also define the interval
\begin{equation}\label{eq:T'}
T'\coloneqq\left[-\sqrt{2\log n-3\log\beta}, \sqrt{2\log n-3\log\beta}\right]
\end{equation}
if $\beta \le n^{\frac23}$ and define $T'=\varnothing$ if $\beta >n^{\frac23}$.}
We use the Kac-Rice formula to compute the expected number of modes of $\wh{P}_n$ in the symmetric intervals $T$ and $T'$. All asymptotics are as $n, \beta\to \infty$.

\begin{proposition}\label{prop:main-int}
If $n^c\lesssim \beta \lesssim n^{2-c}$ for arbitrarily small $c>0$, then 
\begin{enumerate}
    \item In expectation over $X_i$, the number of modes of $\wh{P}_n$ in $T$ is
$\Theta\left( \sqrt{\beta\log \beta} \right)$. 
\item In expectation over $X_i$, the number of modes of $\wh{P}_n$ in $T'$ is
$O\left(\sqrt{\beta} \right)$. 
\end{enumerate}
\end{proposition}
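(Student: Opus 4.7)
My plan is to apply the Kac-Rice formula (\Cref{thm:kac-rice}) to $\wh{P}_n'$ on each of $T$ and $T'$: a mode of $\wh{P}_n$ is a sign-changing zero of $\wh{P}_n'$ (from $+$ to $-$), so under the regularity provided by \Cref{ass: assumption.1},
\begin{equation}\label{eq:plan-kac}
\mathbb{E}\bigl[\#\{\text{modes of }\wh{P}_n\text{ in }I\}\bigr]\;=\;\int_I \mathbb{E}\!\left[\bigl(\wh{P}_n''(t)\bigr)^-\,\middle|\,\wh{P}_n'(t)=0\right] f_{\wh{P}_n'(t)}(0)\,dt, \qquad I\in\{T,T'\},
\end{equation}
where $z^-:=\max(-z,0)$ and $f_{\wh{P}_n'(t)}$ is the marginal density of $\wh{P}_n'(t)$. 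Everything reduces to a sharp pointwise estimate of this integrand followed by integration.

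Since $\wh{P}_n'(t)$ and $\wh{P}_n''(t)$ are sample averages of $n$ iid $\mb{R}^2$-valued vectors $Y_i(t):=(\phi_\beta'(t-X_i),\phi_\beta''(t-X_i))$, where $\phi_\beta$ is the $N(0,\beta^{-1})$ density, I would first compute in closed form (they are Gaussian moments) the pointwise mean $\mu(t)$, covariance $\Sigma(t)$ and first few cumulants of $Y_1(t)$. The identity $\mathbb{E}[\phi_\beta(t-X_1)]=\varphi_{\sqrt{1+\beta^{-1}}}(t)$ and its derivatives give, for large $\beta$ and up to positive constants,
\[
\mu_1(t)^2 \;\asymp\; t^2 e^{-t^2}, \qquad \Sigma_{11}(t)\;\asymp\;\beta^{3/2}e^{-t^2/2}, \qquad \Sigma_{22}(t)\;\asymp\;\beta^{5/2}e^{-t^2/2},
\]
with correlation $\Sigma_{12}(t)/\sqrt{\Sigma_{11}(t)\Sigma_{22}(t)}=O(t/\sqrt\beta)\to 0$ uniformly on $T$, so $(\wh P_n'(t),\wh P_n''(t))$ is asymptotically uncorrelated. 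The Mahalanobis exponent $n\mu_1(t)^2/(2\Sigma_{11}(t))\asymp nt^2 e^{-t^2/2}/\beta^{3/2}$ is $O(1)$ precisely at the outer boundary $t^2\approx 2\log n-3\log\beta$ of $T'$, which identifies $\partial T'$ as the cross-over between the mode-poor interior of $T'$ and the mode-rich band $T\setminus T'$.

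The main technical step, and where the bulk of the work lies, is to upgrade this CLT heuristic into a rigorous second-order \emph{Edgeworth expansion} for the joint density of $(\wh{P}_n'(t),\wh{P}_n''(t))$, valid \emph{uniformly} in $t\in T$, with a remainder smaller than the leading bivariate Gaussian density at $0$ in the first coordinate by an $o(1)$ factor. The difficulty is twofold: (i) the density must be evaluated at the atypical point $\wh{P}_n'(t)=0$, which sits many standard deviations away from the mean $\mu_1(t)$ near the outer boundary of $T$; and (ii) the law of $Y_1(t)$ depends on $t$ and becomes more and more skewed (the samples $X_i$ contributing unequally) as $|t|$ grows. The ranges $n^c\lesssim\beta\lesssim n^{2-c}$ and the tail control furnished by \Cref{ass: assumption.1} are exactly what is needed to keep the higher cumulants of $Y_1(t)$ controlled uniformly in $t\in T$ and to ensure a Cram\'er-type nondegeneracy of its characteristic function, so that a bivariate Edgeworth bound goes through.

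Once the uniform Edgeworth expansion is in hand, both items follow by Laplace-type integration. For item~1, the integrand in \eqref{eq:plan-kac} factorizes to leading order as
\[
f_{\wh{P}_n'(t)}(0)\cdot\mathbb{E}\bigl[(\wh{P}_n''(t))^-\bigm|\wh{P}_n'(t)=0\bigr]\;\asymp\;\sqrt{\tfrac{n}{\Sigma_{11}(t)}}\cdot\sqrt{\tfrac{\Sigma_{22}(t)}{n}}\cdot\exp\!\Bigl(-\tfrac{nt^2 e^{-t^2/2}}{\beta^{3/2}}\Bigr)\;\asymp\;\sqrt\beta\,\exp\!\Bigl(-\tfrac{nt^2 e^{-t^2/2}}{\beta^{3/2}}\Bigr),
\]
which is of order $\sqrt\beta$ on the band $t^2\in[\,2\log n-3\log\beta,\,2\log n-\log\beta-\omega(\beta)\,]$ of width $\Theta(\sqrt{\log\beta})$ and exponentially suppressed outside; integrating over $T$ yields matching upper and lower bounds of $\Theta(\sqrt{\beta\log\beta})$. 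For item~2, on $T'$ one separates off a small neighborhood of $0$ where $\wh{P}_n$ has a single bulk mode inherited from the underlying $N(0,1)$ (contributing $O(1)$); on the rest of $T'$ the exponent is at least $(2\log n-3\log\beta)(1+o(1))\gg \omega(\beta)$, so the integrand is suppressed by more than $e^{-\omega(\beta)/2}$ uniformly, and the same Laplace-type integration delivers the claimed bound $O(e^{-\omega(\beta)/4}\sqrt{\beta\log\beta})$, the slack from $\omega/2$ to $\omega/4$ absorbing subleading Edgeworth corrections.
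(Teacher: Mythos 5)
Your plan matches the paper's proof essentially step for step: apply Kac--Rice to $\wh{P}_n'$ on $T$ and $T'$, compute the (Gaussian-integral) moments of the summands, substitute the normal approximation and do Laplace integration (the paper's $A_t\asymp \beta^{-3/2}n t^2 e^{-t^2/2}$ is your Mahalanobis exponent, and your $\sqrt{\Sigma_{22}/\Sigma_{11}}\asymp\sqrt{\beta}$ prefactor is the paper's $\sqrt{\beta}e^{-A_t}$ integrand), then justify the approximation by a uniform-in-$t$ Edgeworth expansion with cumulant and Cram\'er-type bounds. Two small inaccuracies worth flagging: your integrand estimate for item~1 is only $\asymp\sqrt\beta$ near the \emph{outer} boundary $t^2 \approx 2\log n - \log\beta - \omega(\beta)$ (where $A_t\to 0$), not uniformly over the whole band $T\setminus T'$ --- near $t^2\approx 2\log n - 3\log\beta$ one has $A_t\asymp 2\log n - 3\log\beta$ which can be large, so the lower bound should come from a sub-band bounded away from $\partial T'$ as the paper does with $t_s=\sqrt{2\log n - s\log\beta}$ for $s\in[2,5/2]$; and the regularity hypothesis \Cref{ass: assumption.1} controls the modulus of continuity of $\wh{P}_n''$ (to invoke Kac--Rice), not the cumulants of $Y_1(t)$ or Cram\'er's condition --- the latter are established separately (the paper's \Cref{fact:eta} and \Cref{lem: pt.bdd}), and your ``$O(1)$ bulk-mode'' bookkeeping for item~2 is more simply replaced by the paper's observation that $A_t\gtrsim t^2$ on $T'$ yields a Gaussian contribution of $O(\sqrt\beta)=o(e^{-\omega(\beta)/4}\sqrt{\beta\log\beta})$, with the Edgeworth terms over $T'\subset T$ bounded by positivity.
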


The Kac-Rice computation appears tractable only when the joint distribution of $(\wh{P}_n'(t), \wh{P}_n''(t))$ is Gaussian, which it is not. To overcome this obstacle, we apply the Kac-Rice formula over a Gaussian approximation of the joint distribution in \cref{sec:normal}.
For the specific underlying density and KDE in \cref{eq:gkde}, we are able to justify in \cref{sec:error} the approximation for all $t$ in the growing interval $T$ 
instead of a fixed interval. This is why \cref{thm:mammen} only counts modes in a fixed interval.

To show the validity of the Gaussian approximation, we use the \emph{Edgeworth expansion} of the joint distribution of $(\wh{P}_n'(t), \wh{P}_n''(t))$ around the Gaussian distribution with matching first two moments. We bound the error due to the third order term of the expansion directly, and deal with the higher order terms by appealing to the error bounds of densities in the Edgeworth approximation similar to \cite[Theorem 19.2 and 19.3]{bhattacharya2010normal}. This strategy has been used in \cite{bally2019non}, but in a completely different context.
We note that \cite{mammen97} employ the same theorem to justify the Gaussian process approximation over a fixed interval.

\edit{
Indeed, as $T, \beta$ grow with $n$, the error decay rate of the Edgeworth approximation is quite delicate near the boundary of $T$. Instead of the usual case of powers of $n^{-\frac12}$, it is powers of $e^{-\frac{\omega(\beta)}{4}}$ (see \cref{lem:error-higher} and \cref{eq:rate}). This is exactly why we need to introduce the $\omega(\beta)$ term in $T$.}
In doing so, we will see that the normal approximation is invalid outside of $T$ (see \cref{rmk:delicate}), but crucially $T$ is sufficiently large to cover almost all modes, as observed empirically in \cref{rmk:empirical} and \Cref{figure:approx} and given below. 

\begin{proposition} \label{prop:main-tail}
If $n^c\lesssim \beta \lesssim n^{2-c}$ for arbitrarily small $c>0$, then the expectation over $X_i$ of the number of modes of $\wh{P}_n$ that lie outside of $T$ is
$O\left( e^{\frac{\omega(\beta)}{2}}\sqrt{\beta} \right)$.
\end{proposition}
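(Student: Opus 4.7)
The plan is to reduce counting modes of $\widehat{P}_n$ outside $T=[-T_*,T_*]$, where $T_*:=\sqrt{2\log n-\log\beta-\omega(\beta)}$, to counting samples in a slightly enlarged tail, and then to apply a Gaussian tail estimate. The guiding intuition is that a mode $t$ with $|t|>T_*$ must lie close to some sample $X_i$, because far from every sample the KDE and its first two derivatives are exponentially small in the bandwidth.

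Concretely, set $\Delta:=C\sqrt{\log\beta/\beta}$ for a sufficiently large absolute constant $C$, and let $S:=\{i : X_i>T_*-\Delta\}$. I would first establish the deterministic (almost sure) bound
\begin{equation*}
\#\bigl\{\text{modes of } \widehat{P}_n \text{ in } (T_*,\infty)\bigr\} \;\le\; |S| + O(1),
\end{equation*}
which combines two ingredients: (i) a pointwise estimate showing that whenever $\min_i|t-X_i|>\Delta$, each of $|\widehat{P}_n(t)|$, $|\widehat{P}_n'(t)|$, $|\widehat{P}_n''(t)|$ is bounded by $\beta^{O(1)-C^2/2}$, so no meaningful modes appear in sample-free regions; and (ii) a structural bound on the number of modes in the $\Delta$-neighborhoods of samples, relying on the classical fact that an equal-variance univariate Gaussian mixture with $N$ components has at most $N$ modes (see e.g.\ Carreira-Perpi\~n\'an), together with gap-counting between consecutive tail samples to absorb the boundary gap straddling $T_*$ into the $O(1)$ correction. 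Taking expectations and using the law-symmetry $X_i\overset{\text{law}}{=}-X_i$ together with Mills's ratio $\overline\Phi(t)\le\phi(t)/t$ for $t>0$,
\begin{equation*}
\mathbb{E}\bigl[\#\text{modes outside } T\bigr] \;\le\; 2n\,\overline\Phi(T_*-\Delta)+O(1) \;\le\; (1+o(1))\,\frac{n}{T_*\sqrt{2\pi}}\,e^{-T_*^2/2},
\end{equation*}
where the last inequality uses $\Delta T_* = O(\log\beta/\sqrt{\beta})\to 0$ under $\beta\gtrsim n^c$, so $(T_*-\Delta)^2=T_*^2+o(1)$.

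Plugging in $e^{-T_*^2/2}=n^{-1}\sqrt{\beta\,e^{\omega(\beta)}}$ from the definition of $T_*$, and noting $T_*\asymp\sqrt{\log n}$ under $n^c\le\beta\le n^{2-c}$ and $\omega(\beta)\ll\log\log\beta$, the right-hand side becomes $O\bigl(\sqrt{\beta\,e^{\omega(\beta)}/\log n}\bigr)=o\bigl(\sqrt{\beta\,e^{\omega(\beta)}}\bigr)$, even stronger than the stated bound. The main obstacle is the localization step: upgrading the heuristic ``modes live near samples'' to a rigorous deterministic count, in particular ruling out spurious zeros of $\widehat{P}_n'$ in sample-free regions (a perturbative argument exploiting the exponential smallness of $\widehat{P}_n'$ there) and bounding the number of local maxima of $\widehat{P}_n$ in each $\Delta$-window around a tail sample by $O(1)$ via equal-variance Gaussian mixture structure. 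Once this localization is in place, the Gaussian tail estimate and parameter substitutions are routine.
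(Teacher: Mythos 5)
Your proposal has a genuine gap at the localization step, and you half-acknowledge it yourself. In your step (i) you argue that whenever $\min_i|t-X_i|>\Delta$ the quantities $|\widehat P_n'(t)|$ and $|\widehat P_n''(t)|$ are exponentially small, and conclude that ``no meaningful modes appear.'' But exponential smallness of $\widehat P_n'$ does \emph{not} preclude $\widehat P_n'(t)=0$ with $\widehat P_n''(t)<0$; a mode is just a zero of the derivative with the right sign of the second derivative, and a tiny function can still have many zeros. You flag this as ``the main obstacle'' and sketch a ``perturbative argument,'' but no such argument is given, and it is not clear that one exists along these lines, since you would have to rule out sign changes of $\widehat P_n'$ over a sample-free region rather than merely bound its magnitude.

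The fix is to partition the samples, not the real line into $\Delta$-windows. You already cite the key ingredient: Carreira-Perpi\~n\'an's scale-space result that a univariate Gaussian mixture of $N$ equal-variance components has at most $N$ modes, and more specifically that adding one more component can increase the mode count by at most one. Apply it as follows. Fix $a>0$ and split $\widehat P_n=\sum_{X_i<a}g_i+\sum_{X_i\ge a}g_i$. Each $g_i$ with $X_i<a$ is strictly decreasing on $(a,\infty)$, so the first partial sum has no modes in $(a,\infty)$. Now add the $g_i$ with $X_i\ge a$ one at a time; by the scale-space result, each addition creates at most one new mode in $(a,\infty)$. Hence, deterministically,
\begin{equation*}
\#\bigl\{\text{modes of }\widehat P_n\text{ in }(a,\infty)\bigr\}\;\le\;\bigl|\{i:X_i\ge a\}\bigr|,
\end{equation*}
with no $\Delta$-buffer, no bound on mode counts per window, and no issue with spurious zeros in sample-free regions. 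Taking $a=T_*$, using symmetry and linearity of expectation, and applying the elementary tail bound $\mathbb{P}(|X|\ge T_*)\le 2e^{-T_*^2/2}$ immediately gives $\mathbb{E}U_0(F_n,\mathbb{R}\setminus T)\le 2n e^{-T_*^2/2}=2\sqrt{\beta e^{\omega(\beta)}}$. Your Mills-ratio refinement would shave off an extra $\sqrt{\log n}$, which is true but unnecessary for the stated bound; the real issue is that your steps (i)--(ii) should be replaced by this cleaner partition-by-sample argument.
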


We prove this in \cref{sec:tail} with an argument from scale-space theory: we bound the number of modes outside $T$ by the number of samples $X_i$ outside $T$, which we then bound naively. This is precisely the argument used by \cite[Theorem 2]{carreira2003number} to show Gaussian mixtures over $\mb{R}$ with $n$ components must have at most $n$ modes. This argument crucially relies on the kernel density estimator being Gaussian (see \cref{rmk:gkde}). 

Now, \cref{thm:main-result} follows from \cref{prop:main-int,prop:main-tail} provided $1\ll \omega(\beta)\ll \log \log \beta$.
\edit{Indeed, the bounds on $\omega(\beta)$ are chosen to balance these error terms. In fact, all error terms other than the Kac-Rice integral over $T\setminus T'$ of the Gaussian approximation of the density are $O\left( e^{-\frac{\omega(\beta)}{4}}\sqrt{\beta\log\beta} \right)$.}
\subsection{Notation}

We adopt standard notation from asymptotic analysis: we write $f(x)\ll g(x)$ or $f(x)=o(g(x))$ if $f(x)/g(x)\to 0$ as $x\to\infty$; $f(x) \lesssim g(x)$ or $f(x)=O(g(x))$ if there exists a finite, positive constant $C$ such that $f(x)\le Cg(x)$; and we write $f(x)\asymp g(x)$ or $f(x)=\Theta(g(x))$ if $f(x)\lesssim g(x)$ and $g(x)\lesssim f(x)$. 
\edit{We also write $f(x)\sim g(x)$ if $f(x)/g(x)\to 1$ as $x\to\infty$. Similarly, for vector and matrix-valued functions $\mbf{f}(x)\lesssim \mbf{g}(x)$ if $f_i(x)\lesssim g_i(x)$ for every entry, indexed by $i$. We use the analogous notation for $\mbf{f}(x)\asymp \mbf{g}(x)$ and $\mbf{f}(x)\sim \mbf{g}(x)$. Note that all asymptotic constants are absolute.}

\section{Kac-Rice for the normal approximation}
\label{sec:normal}

\subsection{The Kac-Rice formula}
\label{sec:kac-rice}

We say that $\Psi:\mb{R}\to\mb{R}$ has an \emph{upcrossing of level $u$} at $t\in\mb{R}$ if $\Psi(t)=u$ and $\Psi'(t)>0$. 
The Kac-Rice formula allows us to compute the expected number of up-crossings when $F$ is a random field (i.e., a stochastic process).

\begin{theorem}[Kac-Rice, {\cite[pp. 62]{azais2009level}, \cite[Section 11.1]{adler2009random}}]
\label{thm:kac-rice}
Consider a random $\Psi:\mb{R}\to\mb{R}$, some fixed $u\in\mathbb{R}$ and a compact $T\subset\mathbb{R}$. Suppose
\begin{enumerate}
    \item $\Psi$ is a.s. in $\mathscr{C}^1(\mathbb{R})$, and $\Psi, \Psi'$ both have finite variance over $T$;
    \item The law of $\Psi(t)$ admits a density $p_t^{[1]}(x)$ which is continuous for $t\in T$ and $x$ in a neighborhood of $u$;
    \item The joint law of $(\Psi(t), \Psi'(t))$ admits a density $p_t(x,y)$ which is continuous for $t\in T$, $x$ in a neighborhood of $u$, and $y\in\mb{R}$;
    \item $\mathbb{P}(\upomega(\eta)>\varepsilon)=O(\eta)$ as $\eta\searrow 0^+$ for any $\varepsilon>0$, where $\upomega(\cdot)$ denotes the modulus of continuity\footnote{defined, for $f:\mb{R}\to\mb{R}$, as $\upomega(\eta)= \sup_{t, s\colon |t-s|\le\eta}|f(t)-f(s)|$.} of $\Psi'(\cdot)$.
\end{enumerate} 
Define the number of up-crossings in $T$ of $\Psi$ at level $u\in\mathbb{R}$ as
\begin{equation*} 
U_u(\Psi, T) \coloneqq \left|\{t\in T:\Psi(t)=u, \Psi'(t)>0\}\right|.
\end{equation*}
Then, with expectation taken over the randomness of $\Psi$, 
\begin{equation}
\label{eq:krf}
\mb{E}U_u(\Psi, T) = \int_T \int_0^\infty yp_t(u, y)\diff y\diff t.
\end{equation}
\end{theorem}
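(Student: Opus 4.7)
My plan is to follow the classical proof of the Kac-Rice formula by regularizing the counting function and applying Fubini. The key identity at the pathwise level is that for any $\Psi \in \mathscr{C}^1(\mathbb{R})$ whose level set $\{t \in T : \Psi(t) = u\}$ contains no tangential crossings (i.e., no $t$ with $\Psi'(t) = 0$), we have
\[
U_u(\Psi, T) = \lim_{\eta \to 0^+} U_\eta(\Psi, T), \qquad U_\eta(\Psi, T) := \frac{1}{\eta} \int_T \mathbf{1}_{[u, u+\eta]}(\Psi(t)) \, \Psi'(t)_+ \, dt,
\]
where $x_+ = \max(x, 0)$. Indeed, for $\eta$ small each upcrossing $t_0$ contributes an interval of length $\approx \eta / \Psi'(t_0)$ on which $\Psi(t) \in [u, u+\eta]$ and $\Psi'(t) > 0$, while $\int \Psi'(t)\,dt$ across that interval equals exactly $\eta$; each downcrossing contributes nothing to the positive part, so the integral counts upcrossings with weight one in the limit.

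Next I would take expectations and apply Fubini, using assumption 3 on continuity of the joint density $p_t(x, y)$:
\[
\mathbb{E}\, U_\eta(\Psi, T) = \frac{1}{\eta} \int_T \int_u^{u+\eta} \int_0^\infty y \, p_t(x, y) \, dy \, dx \, dt.
\]
Continuity of $p_t$ in $x$ near $u$ together with a uniform domination of the inner integral (available on compact $T$) and dominated convergence then yield
\[
\lim_{\eta \to 0^+} \mathbb{E}\, U_\eta(\Psi, T) = \int_T \int_0^\infty y \, p_t(u, y) \, dy \, dt,
\]
which is the right-hand side of \eqref{eq:krf}.

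The main obstacle — and the reason assumptions 1, 2, and 4 are invoked — is exchanging the limit with the expectation on the left-hand side, i.e., establishing $\mathbb{E}\, U_u(\Psi, T) = \lim_{\eta \to 0^+} \mathbb{E}\, U_\eta(\Psi, T)$. Assumption 2 rules out atoms by ensuring $\mathbb{P}(\Psi(t) = u) = 0$ for each $t \in T$, and assumption 1 gives the $\mathscr{C}^1$ regularity needed for the pathwise approximation to hold off a negligible event. The delicate role belongs to assumption 4: the tail bound $\mathbb{P}(\upomega(\eta) > \varepsilon) = O(\eta)$ on the modulus of continuity of $\Psi'$ guarantees that almost surely there are no tangential crossings at level $u$ in $T$. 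The standard route is to partition $T$ into $N \asymp 1/\eta$ subintervals, note that a tangential crossing forces both $|\Psi'|$ to remain small and $\Psi$ to cross $u$ on the same sub-interval, and combine assumption 4 with assumption 2 and a union bound to show the probability of this event vanishes as $\eta \to 0^+$. Fatou's lemma then delivers $\mathbb{E}\, U_u(\Psi, T) \le \liminf_{\eta \to 0^+} \mathbb{E}\, U_\eta$, and a matching domination of $U_\eta$ (by a slightly enlarged crossing count that is controlled by the same modulus bound) gives the reverse inequality, establishing \eqref{eq:krf}.

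The hard part is thus purely the modulus-of-continuity control that excludes pathological level crossings — precisely the role that Assumption~\ref{ass: assumption.1} will play in our subsequent application to $\Psi = \wh{P}_n'$. For the full execution of the discretization-plus-modulus argument I would defer to the presentations in \cite[pp.~62]{azais2009level} and \cite[Section~11.1]{adler2009random}, where this machinery is developed in complete detail.
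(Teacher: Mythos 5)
The paper does not provide its own proof of this theorem; it is imported directly from the cited texts of Aza\"{i}s--Wschebor and Adler--Taylor. Your sketch faithfully reproduces the standard Kac counting-functional argument used in those references---regularize the upcrossing count via $U_\eta$, apply Fubini together with continuity of the joint density $p_t(x,y)$ to obtain $\lim_{\eta\to 0^+}\mathbb{E}\,U_\eta = \int_T\int_0^\infty y\,p_t(u,y)\,dy\,dt$, and then exchange the $\eta\to 0^+$ limit with the expectation on the left by using the modulus-of-continuity tail bound (condition~4) together with the density assumption (condition~2) to rule out tangential crossings---so your outline is consistent with the cited proofs and there is no internal paper proof to compare it against.
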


The Kac-Rice formula extends to any dimension $d\ge 1$, and also on manifolds other than $\mathbb{R}^d$---see \cite[Section 11.1]{adler2009random}.
It is the classical tool for computing the expected number of critical points of random fields, with many recent applications including spin glasses \cite{auffinger2013random, fan2021tap} and landscapes of loss functions arising in machine learning \cite{maillard2020landscape}. While the method applies to general densities, the conditional expectation appears infeasible to compute or estimate beyond the Gaussian case. \edit{Moreover, we remark that our reliance on the Kac-Rice formula precludes us from any ``with high probability'' anlogs of \cref{thm:main-result}, though we do expect such statements to hold.}

For the KDE $\wh{P}_n$ defined in \cref{eq:gkde}, define the random function $F_n:\mb{R}\to\mb{R}$ by
\begin{equation}
\label{eq:Fn}
F_n(t) = \frac{1}{\sqrt{n}}\sum_{i=1}^n \left(t-X_i\right)e^{-\frac{\beta}{2}\left(t-X_i\right)^2}= -\sqrt{\frac{2\pi n}{\beta^3}}\wh{P}_n'(t).
\end{equation}
Then $t\in\mb{R}$ is an upcrossing of $F_n$ at level $0$ if and only if $F_n(t)=0$ and $F'_n(t)>0$. This is equivalent to $\wh{P}'_n(t)=0$ and $\wh{P}_n''(t)<0$, i.e. $t$ is a mode of $\wh{P}_n$. Thus, the number of modes of $\wh{P}_n$ in $T$ is given by $U_0(F_n, T)$. 
For $T, T'$ defined in \eqref{eq:T}--\eqref{eq:T'}, \cref{prop:main-int,prop:main-tail} \edit{are equivalent to}
\begin{equation}
\label{eq:main-eq-form}
\begin{aligned}
\mb{E}U_0\left(F_n, T\right) & \asymp \sqrt{\beta\log\beta},\\
\mb{E}U_0\left(F_n, T'\right) & \lesssim e^{-\frac{\omega(\beta)}{4}}\sqrt{\beta\log\beta},\\
\mb{E}U_0\left(F_n, \mb{R}\setminus T\right) & \lesssim e^{\frac{\omega(\beta)}{2}}\sqrt{\beta}.
\end{aligned}
\end{equation}

\subsection{Computing the Gaussian approximation}
Without loss of generality, fix $t\in T$ with $t\ge 0$.  
We can rewrite $F_n(t)$ from \cref{eq:Fn} and compute its derivative: for independent copies $(G_i, G_i')$ of
\begin{equation} \label{eq: Gt}
    \begin{bmatrix}
        G(t) \\ G'(t)
    \end{bmatrix}=
e^{-\frac{\beta}{2} (t-X)^2}\begin{bmatrix}
        t-X \\ 1-\beta (t-X)^2
     \end{bmatrix},
\end{equation}
where $X\sim N(0,1)$, we have
\begin{equation*} 
\begin{bmatrix}
        F_n(t) \\ 
        F_n'(t)
\end{bmatrix}
= \frac{1}{\sqrt{n}}\sum_{i=1}^n \begin{bmatrix}
        G_i(t) \\ G_i'(t)
    \end{bmatrix}\sim p_t.
\end{equation*}
We prove that $p_t$ is a well-defined density in \Cref{lem: pt.bdd}, and defer the following computation to \cref{sec:moments,sec:moments}.
\begin{lemma}
\label{lem:moments-p}
The mean and covariance matrix of the random vector $(F_n(t), F_n'(t))$ are given respectively by
\begin{equation}
\label{eq:moments-p}
\begin{aligned}
\mu_t &\coloneqq \sqrt{n}\begin{bmatrix}
\mb{E}G(t)\\ \mb{E}G'(t)
\end{bmatrix} \sim n^{\frac12}\beta^{-\frac32}e^{-\frac{t^2}{2}}\begin{bmatrix}
t \\ \edit{1}-t^2
\end{bmatrix} \\
\Sigma_t &\coloneqq \begin{bmatrix}
\on{Var}G(t) & \on{Cov}(G(t), G'(t))\\ \on{Cov}(G(t), G'(t)) & \on{Var}G'(t)
\end{bmatrix} \sim 2^{-\frac52}\beta^{-\frac32} e^{-\frac{t^2}{2}}\begin{bmatrix}
2 & -t \\ -t & 3\beta
\end{bmatrix}.
\end{aligned}
\end{equation}
\end{lemma}

We proceed to centering and rescaling the density $p_t$. Let $Y_i(t)$, $i\in[n]$, be independent copies of
\begin{equation}
\label{eq:Yi}
Y(t) = \Sigma_t^{-\frac12}\begin{bmatrix}
G(t) - \mb{E}G(t) \\ 
G'(t) - \mb{E}G'(t)
\end{bmatrix}
\end{equation}
Let $q_t$ denote the density of $n^{-\frac{1}{2}} \sum_{i=1}^n Y_i(t)$. By construction $q_t$ has mean $0$ and covariance $I_2$. Moreover, by the change-of-variables formula, it holds
\begin{equation}
\label{eq:qt}
p_t(x, y) = \left(\det\Sigma_t\right)^{-\frac12}q_t\left(\Sigma_{t}^{-\frac12}[(x, y)-\mu_t]\right).
\end{equation}
Now, let $\varphi:\mb{R}^2\to\mb{R}$ be the density of $N(0, I_2)$, i.e.,
\begin{equation*}
    \varphi(x) \coloneqq \frac{1}{{2\pi}} e^{-\frac{\Vert x\Vert^2}{2}}.
\end{equation*}
We aim to approximate the Kac-Rice integral \cref{eq:krf} as follows:
\begin{equation}
\label{eq:approx}
\int_T\int_0^\infty yp_t(0, y)\diff y \edit{\diff t}\approx \int_T\int_0^\infty y \left(\det\Sigma_t\right)^{-\frac12}\varphi\left(\Sigma_{t}^{-\frac12}[(0, y)-\mu_t] \right)\diff y\diff t.
\end{equation}
The validity of this approximation is deferred to \cref{sec:error}. In the remainder of this section, we solely focus on computing the right hand side integral. \edit{
\begin{lemma}
\label{lem:phi-t}
There exists $A_t\asymp \beta^{-\frac32}nt^2e^{-\frac{t^2}{2}}, \delta_t\asymp n^{\frac12}\beta^{-\frac32}e^{-\frac{t^2}{2}}(1-t^2/2)$, and $\alpha_t\asymp \beta^{\frac{1}{2}}e^{\frac{t^2}{2}}$ such that 
\begin{equation}
\label{eq:phi-t}
\begin{aligned} 
\left\Vert \Sigma_{t}^{-\frac12}[(0, y)-\mu_t] \right\Vert^2 &\sim A_t+\alpha_t(y-\delta_t)^2,\\
\int_0^\infty y \varphi\left(\Sigma_{t}^{-\frac12}[(0, y)-\mu_t] \right)\diff y & \asymp \alpha_t^{-1}e^{-A_t}.
\end{aligned}
\end{equation}
\end{lemma}
\begin{proof}[Proof of \Cref{lem:phi-t}]
We recall \cref{eq:moments-p} to compute
$$
\Omega\coloneqq\Sigma_t^{-1}\sim 3^{-1}2^{\frac32}\beta^{\frac12}e^{\frac{t^2}{2}} \begin{bmatrix}
3\beta & t \\ t &2
\end{bmatrix}
$$ 
We let the prefactor to be $\alpha_t/2$. Since we kept leading coefficients of entries of $\Omega$ and $\mu_t$ up to a global absolute constant that we absorb in $\alpha_t$ and $\delta_t$, it is safe to verify leading coefficients do not cancel and compute asymptotically:
\begin{align*}
\left\Vert \Sigma_{t}^{-\frac12}[(0, y)-\mu_t]\right\Vert^2 & = \left\langle (-\mu_{t, 1}, y-\mu_{t, 2}), \Sigma_t^{-1}(-\mu_{t, 1}, y-\mu_{t, 2})\right\rangle
\\ & = \Omega_{11}\mu_{t, 1}^2-2\Omega_{12}\mu_{t, 1}(y-\mu_{t, 2})+\Omega_{22}(y-\mu_{t, 2})^2
\\ & \sim \alpha_t\mu_{t, 1}^2\left[\frac{3\beta}{2}-t\left(\frac{y}{\mu_{t, 1}}+\frac{t^2-1}{t}\right)+\left(\frac{y}{\mu_{t, 1}}+\frac{t^2-1}{t}\right)^2\right]
\\ & \sim \alpha_t\mu_{t, 1}^2\left(\frac{3\beta}{2}-\frac{t^2}{4}\right)+\alpha_t \mu_{t, 1}^2 \left(\frac{y}{\mu_{t, 1}}+\frac{t^2-1}{t}-\frac{t}{2}\right)^2
\\ & \sim \frac{3}{2}\beta\alpha_t\mu_{t, 1}^2+ \alpha_t \left(y-\frac{\mu_{t, 1}}{t}\left(1-\frac{t^2}{2}\right)\right)^2
\end{align*}
Now, we let $A_t$ be the first term and let $\delta_t$ be the term subtracting $y$. Verifying the asymptotics of both, we obtain the first statement in \cref{eq:phi-t}.
For the second statement, we have
\[
    \int_0^\infty y \varphi\left(\Sigma_{t}^{-\frac12}[(0, y)-\mu_t] \right)\diff y \sim e^{-A_t}\int_0^\infty y e^{-2\alpha_t(y-\delta_t)^2} \diff y\asymp\alpha_t^{-1}e^{-A_t}
\]
by a standard fact (\cref{lem:gaussian-int}) in Gaussian integrals, upon checking $\alpha_t^{-1}\delta_t^2\ll 1$ in our parameter regime of $t\in T$ and $n\lesssim \beta^{2-c}$. The statement and proof of the fact  is in \cref{sec:proof-gaussian-int}.
\end{proof}}

\subsection{\texorpdfstring{The Kac-Rice integral over $\varphi$}{The Kac-Rice}} \label{sec: 2.3}

\begin{figure}[ht!]
    \centering
    \includegraphics[scale=0.29]{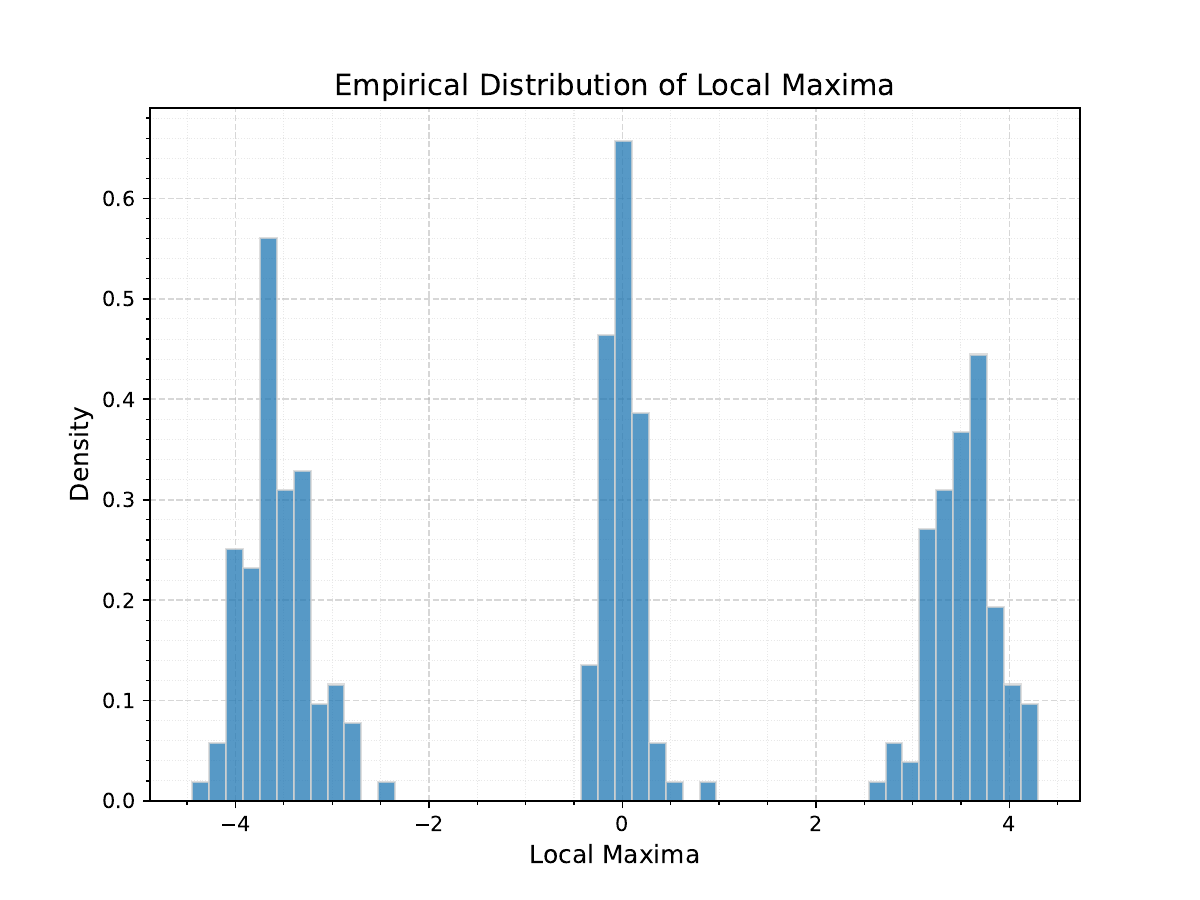}
    \hspace{0.2cm}
    \includegraphics[scale=0.29]{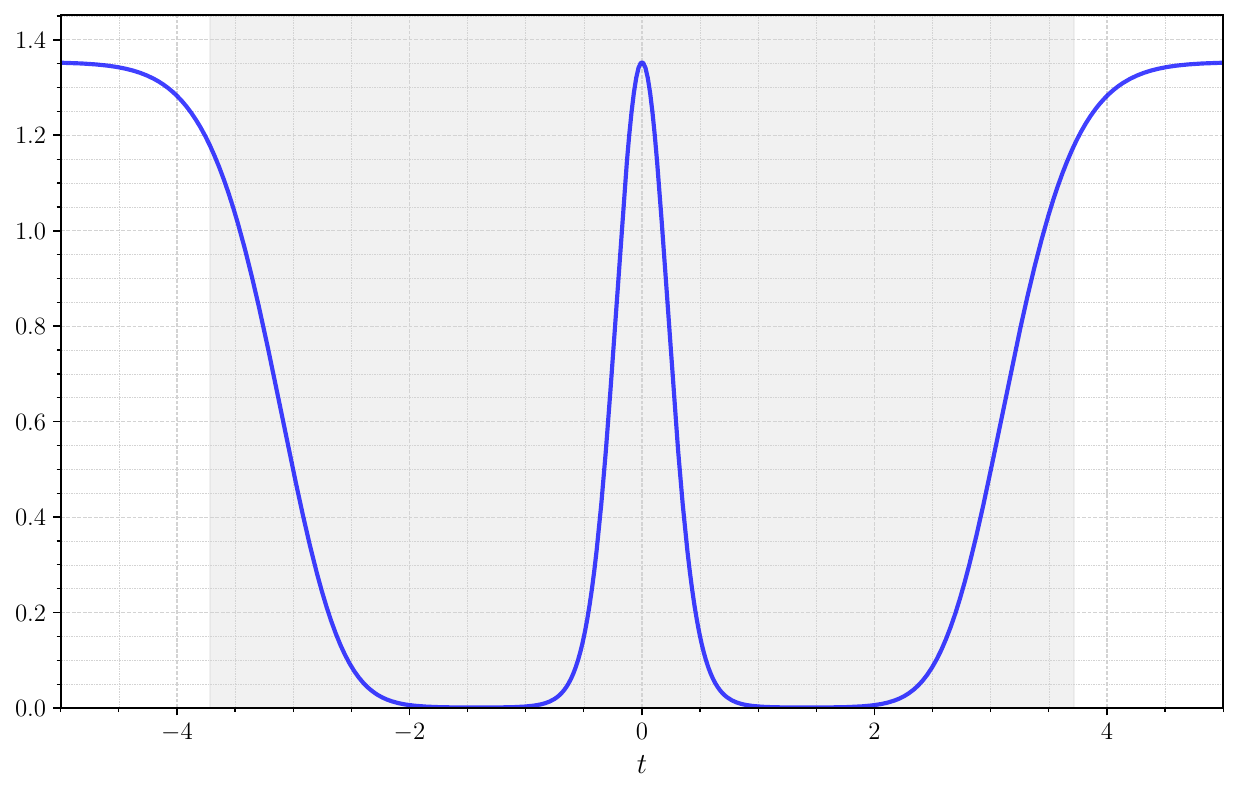}
    \includegraphics[scale=0.29]{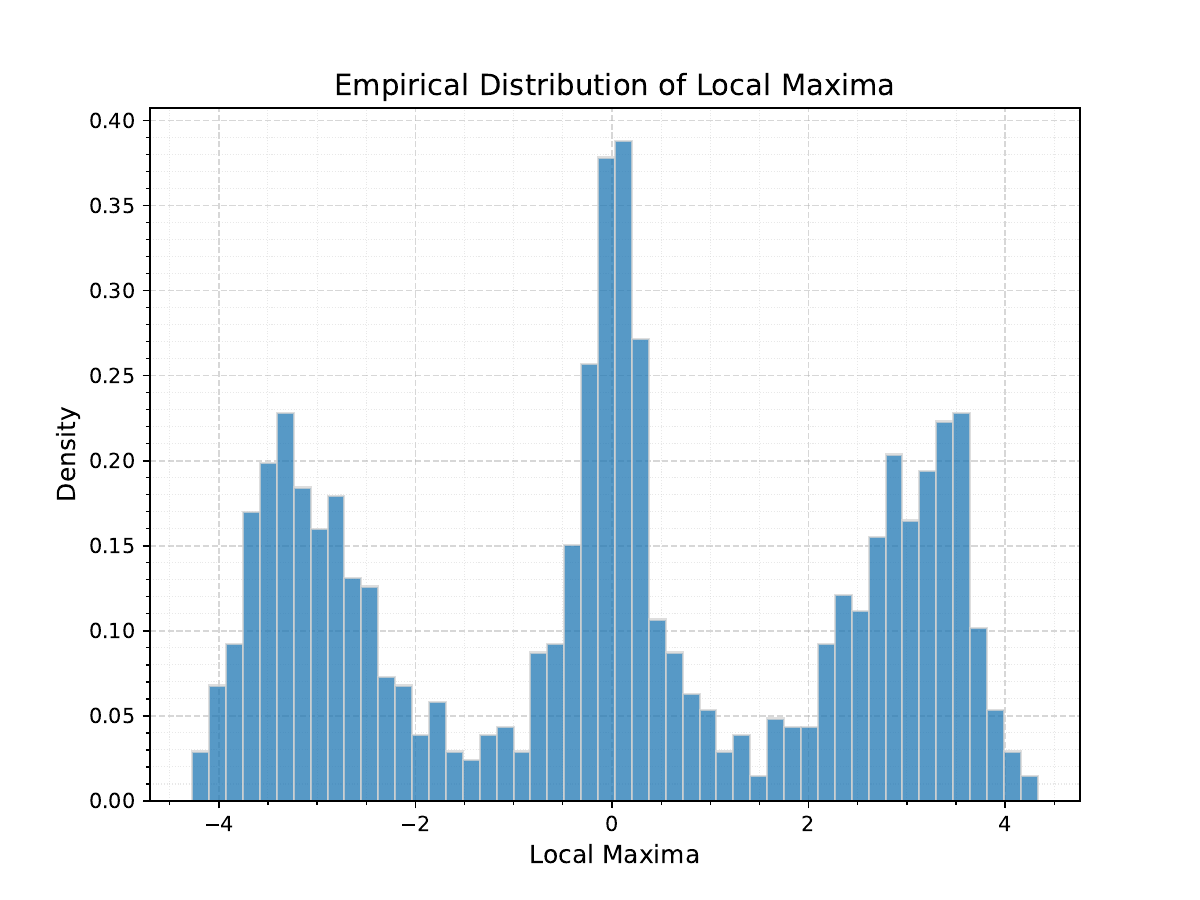}
    \hspace{0.2cm}
    \includegraphics[scale=0.29]{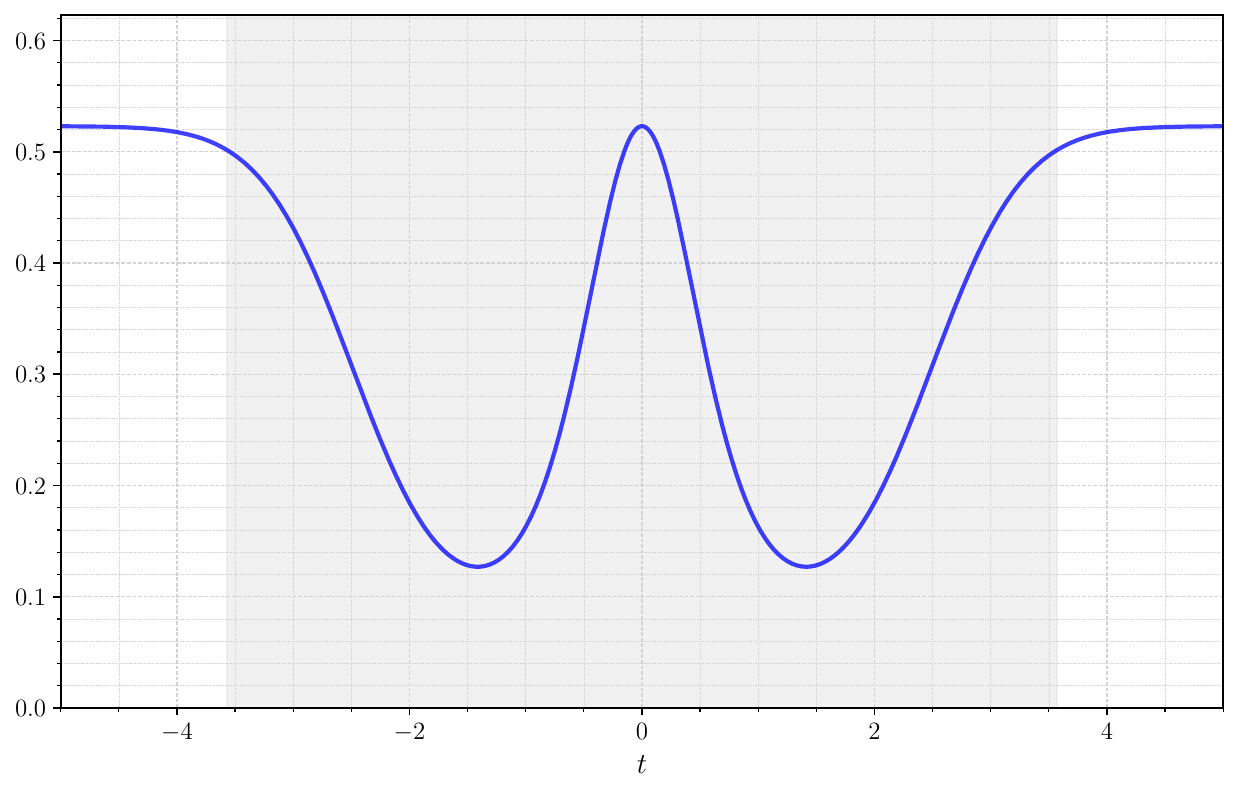}
    \caption{$n=10^5$ is fixed throughout.
    (Left) Empirical distribution of the modes of $\widehat P_n$ over $T$ for $\beta=100$ (top) and $\beta=300$ (bottom). (Right)
    The function $t\mapsto \sqrt{\beta}\exp(-A_t)$ for $\beta=100$ (top) and $\beta=300$ (bottom), which, due to the Kac-Rice formula, is an approximation for the distribution of the number of modes of $\widehat P_n$ in $T$. Shaded in grey is the interval $T$. (Code available at \href{https://github.com/KimiSun18/2024-gauss-kde-attention}{\color{blue}github.com/KimiSun18/2024-gauss-kde-attention}.)}
    \label{figure:approx}
\end{figure}

We compute \cref{eq:krf} under the approximation \cref{eq:approx}. By \cref{eq:phi-t} and \cref{eq:moments-p}, we have that
\begin{equation} 
\label{eq:int-phi-final}
\int_S\int_0^\infty y \left(\det\Sigma_t\right)^{-\frac12}\varphi\left(\Sigma_{t}^{-\frac12}[(0, y)-\mu_t] \right)\diff y\diff t\asymp \sqrt{\beta}\int_S e^{-A_t} \diff t
 \end{equation}
for any measurable $S\subset \mb{R}$. 
Assuming validity of the Gaussian approximation (see \cref{sec:error}), it follows from the Kac-Rice formula that the density of modes at $t \in \mb{R}$ is proportional to $\sqrt{\beta}e^{-A_t}$. We plot this density in \Cref{figure:approx} with the same choice of $n$ and $\beta$ as in the empirical distribution. 
We see that they match on the highlighted interval $T$, but not outside of $T$ where the Gaussian approximation \edit{is no longer valid}---see \cref{rmk:delicate}. 

We compute \cref{eq:int-phi-final} explicitly for $S=T$ and $S=T'$.

\begin{lemma}
\label{lem:main-int-phi}
If $n^c\lesssim \beta \lesssim n^{2-c}$ for some $c>0$, then
\begin{align*}
\int_T\int_0^\infty y \left(\det\Sigma_t\right)^{-\frac12}\varphi\left(\Sigma_{t}^{-\frac12}[(0, y)-\mu_t] \right)\diff y\diff t & \asymp  \sqrt{\beta\log\beta}, \\
\int_{T'}\int_0^\infty y \left(\det\Sigma_t\right)^{-\frac12}\varphi\left(\Sigma_{t}^{-\frac12}[(0, y)-\mu_t] \right)\diff y\diff t & \lesssim  \sqrt{\beta}.
\end{align*}
\end{lemma}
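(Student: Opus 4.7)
The plan is to use \eqref{eq:int-phi-final} to reduce each integral to $\sqrt{\beta}\int_S e^{-A_t}\,\diff t$ (for $S\in\{T,T'\}$) and then analyze the scalar function $A_t\asymp \beta^{-3/2} n t^2 e^{-t^2/2}$ coming from \Cref{lem:phi-t}. Setting $M := n/\beta^{3/2}$ and $s := t^2/2$, one has $A_t\asymp M s e^{-s}$: as $s$ grows, $A_t$ starts at zero, climbs to a maximum $\asymp M$ near $s=1$, and decays back to $\asymp \log M$ at $s=\log M$, which corresponds precisely to the boundary of $T'$. At the boundary of $T$, a parallel computation yields $A_t\asymp \beta^{-1}(\log\beta)\,e^{\omega(\beta)/2} = o(1)$, since $\omega(\beta)\ll\log\log\beta$.

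\emph{Two-sided bound on $\int_T e^{-A_t}\,\diff t$.} The upper bound $\int_T e^{-A_t}\,\diff t\le |T|\asymp \sqrt{\log\beta}$ is immediate from $|T| = 2\sqrt{2\log n-\log\beta-\omega(\beta)}$ and the assumption $2\log n - \log\beta\asymp \log\beta$. For the matching lower bound, I restrict to the positive half of the shell $T\setminus T'$ and substitute $s := 2\log n - \log\beta - t^2$, obtaining
\[
A_t \;\asymp\; \beta^{-1}(2\log n-\log\beta-s)\,e^{s/2} \;\asymp\; \beta^{-1}(\log\beta)\,e^{s/2}.
\]
Hence $A_t\le 1$ whenever $s\le 2\log\beta - C\log\log\beta$ for a sufficiently large constant $C$, and there $e^{-A_t}\ge 1/e$. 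The identity $\sqrt{a}-\sqrt{b}=(a-b)/(\sqrt{a}+\sqrt{b})$ applied to $a = 2\log n-\log\beta-\omega(\beta)$ and $b = 2\log n-3\log\beta+C\log\log\beta$ shows that the corresponding $t$-interval has length $\asymp (\log\beta)/\sqrt{\log\beta}=\sqrt{\log\beta}$, giving $\int_T e^{-A_t}\,\diff t\gtrsim \sqrt{\log\beta}$.

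\emph{Upper bound on $\int_{T'} e^{-A_t}\,\diff t$.} Here the goal is the sharper estimate $O(1)$; the trivial length bound only yields $O(\sqrt{\log\beta})$. When $M$ is bounded (i.e.\ $\beta\approx n^{2/3}$), the trivial bound $|T'|=2\sqrt{2\log M}=O(1)$ already suffices. When $M\to\infty$, I change variable to $s=t^2/2$, writing $\int_0^{t_*} e^{-A_t}\,\diff t \asymp \int_0^{\log M} e^{-cMse^{-s}}\,\diff s/\sqrt{2s}$, and split the $s$-range at $s=1$. On $[0,1]$, $e^{-s}\ge 1/e$ gives $A_t\gtrsim Ms$, and after substituting $u=Ms$ the contribution collapses to $O(1/\sqrt{M})$. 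On $[1,\log M]$, the function $Mse^{-s}$ is decreasing, so $A_t\gtrsim \log M$ throughout, whence $e^{-A_t}\lesssim M^{-c}$ and the contribution is $O(M^{-c}\log M)=o(1)$. Summing, $\int_{T'} e^{-A_t}\,\diff t = O(1)$ uniformly, which upon multiplication by $\sqrt{\beta}$ yields the second estimate.

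\emph{Main obstacle.} The crux is the lower bound on $\int_T e^{-A_t}\,\diff t$: one must verify that the region $\{A_t\ll 1\}$ covers a length-$\sqrt{\log\beta}$ slice of $T\setminus T'$ measured in $t$-coordinates (not just in $s$-coordinates), and that the transition layer where $A_t$ crosses order one is thin enough not to spoil the estimate. This is ultimately what forces the restriction $1\ll\omega(\beta)\ll \log\log\beta$ in the definition of $T$.
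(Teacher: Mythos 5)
Your proposal is correct and follows the same overall strategy as the paper: reduce via \eqref{eq:int-phi-final} to the scalar integrals $\sqrt{\beta}\int_S e^{-A_t}\,\diff t$, take the trivial length bound for the upper estimate on $T$, and locate a length-$\Theta(\sqrt{\log\beta})$ sub-interval of $T\setminus T'$ on which $A_t = o(1)$ for the lower bound. Your parametrization by $s=2\log n-\log\beta-t^2$ together with the $\sqrt{a}-\sqrt{b}$ identity is a fine variant of the paper's choice of $[t_{5/2},t_2]$ with $t_s=\sqrt{2\log n-s\log\beta}$.

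The one place your argument is more labored than necessary is the $T'$ bound. You change variables to $s=t^2/2$, split into $M=n/\beta^{3/2}$ bounded versus unbounded, and further split $[0,\log M]$ at $s=1$. The paper instead notes that $t\in T'$ forces $e^{-t^2/2}\ge e^{-t_3^2/2}=\beta^{3/2}/n$, hence $A_t \gtrsim \beta^{-3/2}nt^2\cdot\beta^{3/2}/n = t^2$ uniformly on $T'$, so $\int_{T'}e^{-A_t}\,\diff t\le\int_0^\infty e^{-Ct^2}\diff t = O(1)$ in one line, with no case split on $M$. Your version still closes (the $M$-bounded case is handled by $|T'|=O(1)$, and the $M\to\infty$ case by the two sub-ranges), but the paper's observation is tighter. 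A minor caution: the case split "$M$ bounded vs.\ $M\to\infty$" is not exhaustive along arbitrary sequences $(n,\beta)$; the paper's uniform pointwise bound $A_t\gtrsim t^2$ sidesteps this. Lastly, your remark that $\omega\ll\log\log\beta$ is what forces the restriction is slightly off for this lemma: here the lower bound only needs $\omega(\beta)\ll\log\beta$; the sharper $\log\log\beta$ ceiling is used elsewhere (in the error estimates of \Cref{lem:error-3} and \Cref{lem:error-higher}).
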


\begin{proof}[Proof of \Cref{lem:main-int-phi}]
Recall $A_t$ from \cref{lem:phi-t}. By \cref{eq:int-phi-final}, it suffices to show that
\begin{equation}
\label{eq:int-phi-b}
\int_Te^{-A_t}\diff t \asymp \sqrt{\log \beta}\quad\text{and}\quad 
\int_{T'} e^{-A_t}\diff t \lesssim 1.
\end{equation}
As $A_t>0$, the integral is at most the length of $T$, which is $O(\sqrt{\log\beta})$ by \cref{eq:T}. 
\edit{
Recall that $n\lesssim\beta^{2-c}$. For this constant $c>0$ and $k=1, 2$, define 
\[ t_k \coloneqq \sqrt{2\log n - \left(1+\frac{c}{10^k}\right)\log \beta}\]
Then, $t_k>0$ and $t_k\in T$ for both $k$. Now, as the integrand is positive, for constants $C, C'>0$
\begin{align*}
\int_Te^{-A_t}\diff t & \ge \int_{t_1}^{t_{2}}\exp\left(-C\beta^{-\frac32}nt^2e^{-\frac{t^2}{2}}\right)\diff t
\\ & \ge \left(t_2-t_1\right) \exp\left(-C\beta^{-\frac32}nt_{2}^2e^{-\frac{t_1^2}{2}}\right)
\\ & \gtrsim \sqrt{\log \beta}\exp \left(-C'\beta^{-\frac12+\frac{c}{10}}\log n\right)
\\ & \gtrsim \sqrt{\log\beta}
\end{align*}
as $n, \beta\to\infty$ with $\log n\asymp \log\beta$, and $c\in (0, 2]$. 
Now if $t\in T'$, we have 
$$
e^{-\frac{t^2}{2}}\ge \exp\left(\log n- \frac{3}{2}\log\beta\right)=\beta^{\frac32}n^{-1}.
$$ 
Hence
\[
\int_{T'} e^{-A_t}\diff t  \lesssim \int_{T'}\exp\left(-C\beta^{-\frac32}nt^2e^{-\frac{t^2}{2}}\right)\diff t
 \le \int_{-\infty}^{\infty}e^{-Ct^2}\diff t
\lesssim 1.\qedhere\]}
\end{proof}


\section{Leveraging the Edgeworth expansion}
\label{sec:error}
In this section, we show the approximation of $\edit{q_t}$ by $\varphi$ is valid in $T$ by showing
\begin{equation}
\label{eq:error-goal}
\int_T\int_0^\infty \left(\det\Sigma_t\right)^{-\frac12}y\left|q_t-\varphi\right|\left(\Sigma_{t}^{-\frac12}[(0, y)-\mu_t]\right)\diff y\diff t \ll \sqrt{\beta\log \beta}.
\end{equation}
One natural idea is to use some asymptotic series to expand $\edit{q_t}$ around $\varphi$, e.g. the Edgeworth expansion, to argue that $|q_t-\varphi|\ll \varphi$ in the sense of the integral over $y$. \edit{To this end, we cite a standard result in normal approximation theory in the case of identity covariance matrix, that we will follow closely,
\begin{theorem}[{\cite[Theorem 19.2 and 19.3]
{bhattacharya2010normal}}]
\label{thm:br}
Let $X_n$ be a sequence of i.i.d. random vectors in $\mb{R}^k$ with mean zero and identity covariance matrix. Suppose $\mb{E}\Vert X_1\Vert^{s+1}<\infty$ for some integer $s\ge 2$, then under suitable conditions, the density $q_n$ of $n^{-1/2}(X_1+\dots +X_n)$ admits the asymptotic expansion
\[ 
\sup_{\mbf{x}\in \mb{R}^k} \left(1+\Vert \mbf{x}\Vert^{s}\right)\left|q_n(\mbf{x}) -\sum_{j=0}^{s-2} n^{-\frac{j}{2}} Q_j(\mbf{x})\right| \lesssim n^{-\frac{s-1}{2}}
\]
as $n\to\infty$, where $Q_j$ is the $j$-th term of the Edgeworth expansion. In particular, $Q_0=\varphi$.
\end{theorem}

It is tempting to directly apply this theorem with $s=2$ to control $| q_t-\varphi|$ in \cref{eq:error-goal}.
}
We discuss the two major obstacles we have to overcome in order to implement this approach.
\begin{itemize}
    \item First, \cref{thm:br} and similar results on validity of asymptotic series such as Edgeworth expansions treat densities $q_t$ and $\varphi$ that are independent of $n$. As $\beta$ grows in $n$, we will need to re-derive these results and carefully track the dependence on $\beta$. This will give extra constraints on $(t, \beta, n)$ for the validity of such an asymptotic series. Fortunately, this will be satisfied precisely when $t\in T$. \edit{The analog of \cref{thm:br} in our setting for $s=2, 3$ is given as \cref{lem:error-higher}.}
    
    \edit{\item Second, if we directly apply \cref{thm:br} with $s=2$ to control $| q_t-\varphi|$, then the inner integral over $y\in [0, \infty)$ in \cref{eq:error-goal} fails to converge as it is of the form 
\begin{equation}
\label{eq:tilde-y-fail}
\int_0^\infty \frac{\tilde{y}}{1+\tilde{y}^2}\diff\tilde{y}\quad\text\quad \text{where}\quad \tilde{y}\coloneqq\alpha_t^{\frac12}(y-\delta_t)
\end{equation}}
\end{itemize}

\edit{
To overcome this obstacle, we 
use the above definition of $\tilde{y}$ and \cref{lem:phi-t} to obtain that
\[ \left\Vert \Sigma_{t}^{-\frac12}[(0, y)-\mu_t]\right\Vert^2 \sim A_t+\tilde{y}^2\]
This naturally suggests casework on which term has the dominant contribution: for $|\tilde{y}|\le \sqrt{A_t}$, we follow \cref{thm:br} for the $s=3$ case to bound the error of $q_t-\varphi$, whereby \cref{eq:tilde-y-fail} integrated from $y=0$ up to $\tilde{y}=\sqrt{A_t}$ converges; for $\tilde{y}\ge \sqrt{A_t}$, we go to the next term $n^{-1/2}\psi$ in the Edgeworth series, and manually control this third order error in \cref{sec:error-3}. Finally, we control the higher order terms following \cref{thm:br} for the $s=3$ case: there, the analog of \cref{eq:tilde-y-fail} for controlling
\[ \int_0^\infty y\left|q_t-\varphi-n^{-\frac12}\psi\right|\left(\Sigma_{t}^{-\frac12}[(0, y)-\mu_t]\right)\diff y\]
converges as $s=3$. This is done in \cref{sec:error-higher}.}

\subsection{Bounding the third order error for small $y$}
\label{sec:error-3}
Recall $Y$ from \cref{eq:Yi}. \edit{Let $\varphi$ denote the density of $N(0,I_2)$ and
$$H^\alpha(x)\coloneqq(-1)^{|\alpha|}\,\varphi(x)^{-1}\partial^\alpha \varphi(x)$$ the standard multivariate
Hermite polynomials for a multi-index $\alpha\in\mb{Z}_{\ge 0}^2$. 
Writing $q_t$ for the density of $n^{-\frac12}\sum_{i=1}^n Y_i(t)$, the third‑order
Edgeworth expansion is the multivariate Hermite expansion of $q_t/\varphi$:
\[
\frac{q_t(\mbf{x})}{\varphi(\mbf{x})}
= 1+\frac{1}{\sqrt{n}} \sum_{|\alpha|=3} \frac{\kappa_t^\alpha}{\alpha!}H^\alpha(\mbf{x})
+ r_{n,t}(\mbf{x}),
\]
so the next term is $n^{-1/2}\psi$ with
\begin{equation}\label{eq:psi}
\psi(\mbf{x})=\varphi(\mbf{x})\sum_{k=0}^3 \frac{\kappa_t^{(k,3-k)}}{k!(3-k)!}H^{(k,\,3-k)}(\mbf{x}).
\end{equation}
Here $\kappa_t^\alpha$ denotes the (order-$|\alpha|$) cumulant of the single‑sample vector
$Y(t)$, i.e. the $\alpha$‑th mixed derivative at $0$ of the cumulant generating function
$\log\mathbb{E}e^{\langle u,Y(t)\rangle}$. For $|\alpha|=3$ and our normalization, one has the equivalent  moment identity
\[
\kappa_t^\alpha= \mathbb{E}\left[H^\alpha(Y(t))\right]
=\mathbb{E}_{Z\sim N(0,I_2)}\left[\frac{q_t(Z)}{\varphi(Z)}H^\alpha(Z)\right].
\]}
\edit{If $|\alpha|\coloneqq\alpha_1+\alpha_2=s$, then $\kappa_t^\alpha$ can be bounded above asymptotically by the $s$-th moments of $\Vert Y\Vert$, which we bound in \cref{sec:moments}.}
\begin{lemma}
\label{lem:eta}
For $s\ge 3$, the cumulants of $Y$ with order $|\alpha|=s$ satisfy
\[\kappa_t^\alpha\lesssim \eta_s\quad\text{where}\quad \eta_s \coloneqq \mb{E}[\Vert Y\Vert^s] \lesssim \left(\beta e^{t^2}\right)^{\frac{s-2}{4}}.\]
\end{lemma}

\edit{
Trivially, $|H^{(k, 3-k)}(\mbf{x})|\lesssim \Vert\mbf{x}\Vert^3$.
By \cref{lem:phi-t}, we know for $y\ge \Delta_t\coloneqq{\delta_t+\sqrt{A_t/\alpha_t}}$ that 
\[ \tilde{y}\coloneqq \alpha_t^{\frac{1}{2}}(y-\delta_t)\asymp \left\Vert\Sigma_{t}^{-\frac12}[(0, y)-\mu_t]\right\Vert \]
Therefore, for any $t\in T$ and $y\ge\Delta_t$, we can bound
\begin{equation}
\label{eq:H3-bound}
\left|H^{(k, 3-k)}\left(\Sigma_{t}^{-\frac12}[(0, y)-\mu_t]\right)\right|\lesssim \Vert\tilde{y}\Vert^3.
\end{equation}
}

Now, by a similar method as \cref{lem:phi-t}, we obtain the following bound. It says that when we integrate the Edgeworth series $\edit{q_t} = \varphi +n^{-\frac12}\psi+\dots$ over \edit{$y\ge\Delta_t$ and $t\in T$}, the contribution $\varphi$ dominates $n^{-\frac12}\psi$, hinting at the validity of the approximation.
\begin{lemma}
\label{lem:error-3}
Recall $T, T'$ from \cref{eq:T,eq:T'}, and $A_t$ from \cref{lem:phi-t}. \edit{Let  $\Delta_t\coloneqq\delta_t+\sqrt{A_t/\alpha_t}$. Then
\begin{align*}
\int_T\int_{\Delta_t}^\infty y\left(n\det\Sigma_t\right)^{-\frac12}\psi\left(\Sigma_{t}^{-\frac12}[(0, y)-\mu_t]\right)\diff y\diff t &\lesssim e^{-\frac{\omega(\beta)}{4}} \sqrt{\beta\log\beta},\\
\int_{T'}\int_{\Delta_t}^\infty y\left(n\det\Sigma_t\right)^{-\frac12}\psi\left(\Sigma_{t}^{-\frac12}[(0, y)-\mu_t]\right)\diff y\diff t &\lesssim\sqrt{\beta}.
\end{align*}}
\end{lemma}
\edit{\begin{proof}[Proof of \Cref{lem:error-3}]
Note that $y\ge \Delta_t$ if and only if $\tilde{y}\coloneqq \alpha_t^{\frac{1}{2}}(y-\delta_t)\ge\sqrt{A_t}$. By \cref{lem:phi-t,lem:eta}, we can combine bounds \cref{eq:psi,eq:H3-bound} to obtain
\begin{align*}
& \int_T\int_{\Delta_t}^\infty y\left(n\det\Sigma_t\right)^{-\frac12}\psi\left(\Sigma_{t}^{-\frac12}[(0, y)-\mu_t]\right)\diff y\diff t 
\\ &\lesssim \sum_{k=0}^3 \int_T \left(n\det\Sigma_t\right)^{-\frac12} \kappa_t ^{(k, 3-k)}\int_{\Delta_t}^\infty y\left[\varphi H^{(k, 3-k)}\right]\left(\Sigma_{t}^{-\frac12}[(0, y)-\mu_t]\right) \diff y\diff t
\\ & \lesssim \int_T \left(n\det\Sigma_t\right)^{-\frac12} \eta_3 e^{-A_t} \int_{\Delta_t}^\infty y e^{-\frac{\tilde{y}^2}{2}}|\tilde{y}|^3\diff y\diff t
\\ & \asymp \int_T \left(n\det\Sigma_t\right)^{-\frac12} e^{-A_t}\eta_3 \alpha_t^{-1}\left(\int_{\sqrt{A_t}}^\infty \tilde{y}^{4}e^{-\frac{\tilde{y}^2}{2}}d\tilde{y}\right)\diff t
\\ & \lesssim n^{-\frac12}\beta^{\frac34}\sup_{t\in T}\left(e^{\frac{t^2}{4}}\right)\int_T e^{-A_t}\diff t
\\ & \lesssim  e^{-\frac{\omega(\beta)}{4}} \sqrt{\beta\log\beta}
\end{align*}
where we apply \cref{eq:int-phi-b}. The second statement for $T'$ holds similarly by considering $\sup_{t\in T'}\left(e^{\frac{t^2}{4}}\right)$.
\end{proof}}
\begin{remark}
\label{rmk:delicate}
One can see at this is actually an asymptotic equality by checking the Gaussian integrals in the proof above are of their typical order (i.e. no cancellation of leading terms). Hence, the decay is only a factor of $e^{-\omega (\beta)/4}$.
For $t\not\in T$, even $t=\sqrt{2\log n-0.99\log\beta}$, the last inequality in \cref{lem:error-3} fails and we get a bound of polynomially larger that $\sqrt{\beta}$. Then, as the third order error is asymptotically larger than the contribution of the Gaussian approximation, so the normal approximation is no longer valid.
This can be seen by comparing the plots in \Cref{figure:approx}. 
\end{remark}

\subsection{Bounding higher order errors}
\label{sec:error-higher}
We follow the classical proof of \cref{thm:br} about the validity of the Edgeworth expansion as an asymptotic series to show bound the higher order pointwise error of density function as follows.
\edit{\begin{lemma}
\label{lem:error-higher}
Suppose $n^c\lesssim \beta \lesssim n^{2-c}$ for some $c>0$. Let $g_2 \coloneqq q_t-\varphi$ and $g_3\coloneqq q_t-\varphi-n^{-\frac12}\psi$. Then
\begin{equation}
\label{eq:error-higher}
\sup_{\mbf{x}\in \mb{R}^2}\left(1+\Vert\mbf{x}\Vert^s\right)\left|q_t-\varphi\right|(\mbf{x}) \lesssim n^{-\frac{s-1}{2}}\eta_{s+1}
\end{equation}
as $n, \beta\to\infty$, for both $s=2$ and $s=3$, where we recall $\eta_s$ from \cref{lem:eta}.
\end{lemma}}
We 
defer the proof to \cref{sec:pf-error-higher}. \edit{Here, we comment on the differences with \cref{thm:br}: there, it is shown that order $s$ error is at most $n^{-\frac{s-1}{2}}$ provided $\eta_{s+1}$ is bounded. In our case, we pick up an extra $\eta_{s+1}$ factor as it is dependent on $\beta$. By \cref{lem:eta}, this bound is at most
\begin{equation}
\label{eq:rate}
n^{-\frac{s-1}{2}}\eta_{s+1} \lesssim \left(n^{-1}\beta^{\frac12}e^{\frac{t^2}{2}}\right)^{\frac{s-1}{2}}\lesssim \begin{cases}
    e^{-\frac{(s-1)\omega(\beta)}{4}} &\textup{if }t\in T\\
    \beta^{-(s-1)/2} &\textup{if }t\in T'
\end{cases}
\end{equation}
by definition of $T$ and $T'$.
This is the key reason for the extra $\omega(\beta)$ term in $T$: we get a small by non-negligible decay rate that matches \cref{lem:error-3}. Using \cref{lem:error-higher}, we control the Kac-Rice integrals.}
\begin{corollary}
\label{cor:error-higher}
\edit{Let $\Delta_t\coloneqq\delta_t+\sqrt{A_t/\alpha_t}$.} If $n^c\lesssim\beta\lesssim n^{2-c}$ for $c>0$, then asymptotically in $n, \beta\to\infty$
\edit{\begin{align*}
\int_T\int_0^{\Delta_t} \left(\det\Sigma_t\right)^{-\frac12}y\left|q_t-\varphi\right|\left(\Sigma_{t}^{-\frac12}[(0, y)-\mu_t]\right)\diff y\diff t  &\lesssim e^{-\frac{\omega(\beta)}{4}} \sqrt{\beta\log\beta}\\
\int_{T}\int_0^\infty \left(\det\Sigma_t\right)^{-\frac12}y\left|q_t-\varphi-n^{-\frac12}\psi\right|\left(\Sigma_{t}^{-\frac12}[(0, y)-\mu_t]\right)\diff y\diff t  &\lesssim e^{-\frac{\omega(\beta)}{2}} \sqrt{\beta\log\beta}.
\end{align*}
Moreover, the left hand sides of both displays above with $T$ replaced by $T'$ are $O(\sqrt{\beta})$.
}
\end{corollary}
\begin{proof}[Proof of \Cref{cor:error-higher}]
\edit{By \cref{lem:error-higher,lem:phi-t}, we let $\tilde{y}\coloneqq \alpha_t^{\frac{1}{2}}(y-\delta_t)$ as before to obtain
\begin{align*}
& \int_T\int_0^{\Delta_t} \left(\det\Sigma_t\right)^{-\frac12}y\left|q_t-\varphi\right|\left(\Sigma_{t}^{-\frac12}[(0, y)-\mu_t]\right)\diff y\diff t
\\ & \lesssim e^{-\frac{\omega(\beta)}{4}}\int_T\int_0^{\Delta_t}  \left(\det\Sigma_t\right)^{-\frac12} y\left(1+\left\Vert \Sigma_{t}^{-\frac12}[(0, y)-\mu_t]\right\Vert ^2\right)^{-1}\diff y\diff t
\\ & \lesssim e^{-\frac{\omega(\beta)}{4}}\int_T\int_0^{\Delta_t}  \left(\det\Sigma_t\right)^{-\frac12} \left(\frac{y}{1+A_t+\tilde{y}^2}\right)\diff y\diff t
\\ & \asymp e^{-\frac{\omega(\beta)}{4}}\int_T \left(\det\Sigma_t\right)^{-\frac12}\alpha_t^{-1} \left(\int_{-\sqrt{A_t}}^{\sqrt{A_t}} \frac{|\tilde{y}|}{1+A_t +\tilde{y}^2}\diff\tilde{y}\right)\diff t
\\ & \asymp e^{-\frac{\omega(\beta)}{4}}\int_T \left(\det\Sigma_t\right)^{-\frac12}\alpha_t^{-1} \log\left(\frac{1+2A_t}{1+A_t}\right)\diff t
\\ & \lesssim e^{-\frac{\omega(\beta)}{4}}\sqrt{\beta}\int_T\log(2)dt
\\ & \lesssim e^{-\frac{\omega(\beta)}{4}}\sqrt{\beta\log\beta}
\end{align*}
where we check that if $y=0$, then $\tilde{y}=-\alpha_t^{\frac12}\delta_t$ and $\alpha_t^{\frac12}\delta_t\ll A_t^{\frac12}$, so we may symmetrize the integral over $\tilde{y}$ up to a constant factor. Now similarly for the second display
\begin{align*}
& \int_T\int_0^\infty \left(\det\Sigma_t\right)^{-\frac12}y\left|q_t-\varphi-n^{-\frac12}\psi\right|\left(\Sigma_{t}^{-\frac12}[(0, y)-\mu_t]\right)\diff y\diff t
\\ & \lesssim e^{-\frac{\omega(\beta)}{2}}\int_T\int_0^\infty \left(\det\Sigma_t\right)^{-\frac12} y\left(1+\left\Vert \Sigma_{t}^{-\frac12}[(0, y)-\mu_t]\right\Vert ^3\right)^{-1}\diff y\diff t
\\ & \lesssim e^{-\frac{\omega(\beta)}{2}}\int_T\int_0^\infty \left(\det\Sigma_t\right)^{-\frac12} y\left(1+\left(A_t+\tilde{y}^2\right)^{\frac32}\right)^{-1}\diff y\diff t
\\ &\lesssim e^{-\frac{\omega(\beta)}{2}}\int_T \left(\det\Sigma_t\right)^{-\frac12} \alpha_t^{-1}\left(\int_{-\infty}^\infty \frac{|\tilde{y}|}{1+|\tilde{y}|^3}\diff\tilde{y}\right)\diff t
\\ & \asymp e^{-\frac{\omega(\beta)}{2}}\sqrt{\beta}\int_T \frac{2\pi}{3\sqrt{3}}dt
\\ & \asymp e^{-\frac{\omega(\beta)}{2}}\sqrt{\beta\log\beta}
\end{align*}

Now, the exact same computation but with $T$ replaced by $T'$ gives bounds $O(\sqrt{\beta})$ upon replacing exponential in $\omega(\beta)$ decay rates with those in \cref{eq:rate} for \cref{lem:error-higher}.
}
\end{proof}
\edit{In particular, by non-negativity of the integrand, the second equation holds upon replacing the bounds of integration of $y$ from $y\ge 0$ to $y\ge\Delta_t$. This is the form we will use.
}

\section{\texorpdfstring{Proof of \Cref{thm:main-result}}{Proof of}}

We prove \cref{prop:main-int,prop:main-tail} by checking \cref{eq:main-eq-form}, thereby proving \cref{thm:main-result}.

\subsection{\texorpdfstring{Proof of \cref{prop:main-int}}{Proof of}}
\label{sec: kac.rice.application}

To prove \Cref{prop:main-int} we seek to apply \Cref{thm:kac-rice}  to $\mb{E}U_0(F_n, T)$. This in turn requires checking all the assumptions of \Cref{thm:kac-rice}. 
We have

\edit{
\begin{proposition} \label{lem: pt.bdd}
Fix any \(\beta>0\), an integer \(n\ge 5\), and $t\in T$. 
Let $\upmu_t$ denote the law of $(F_n(t), F_n'(t))$ defined in \eqref{eq:Fn}. Then $\upmu_t$ admits a density $p_t\in\mathscr{C}^0(\mb{R}^2)$ satisfying $p_t({\bf{x}})\to0$ as $\|\bf{x}\|\to\infty$. 
Moreover, conditions {\it 1, 2, 4} in \Cref{thm:kac-rice} also hold for $\Psi(t) = F_n(t)$, thus \Cref{thm:kac-rice} applies.
\end{proposition}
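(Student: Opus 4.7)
The plan is to construct $p_t$ via Fourier inversion applied to the characteristic function of $(F_n(t),F_n'(t))$, which factorizes as $\Phi_t(\xi)=\phi_t(\xi/\sqrt n)^n$, where
\[
\phi_t(\xi):=\mathbb{E}\bigl[e^{i\xi\cdot \gamma_t(X)}\bigr],\qquad \gamma_t(x):=\bigl((t-x)e^{-\frac{\beta}{2}(t-x)^2},\;(1-\beta(t-x)^2)e^{-\frac{\beta}{2}(t-x)^2}\bigr),
\]
with $X\sim N(0,1)$. If I can show $\Phi_t\in L^1(\mathbb{R}^2)$, then Fourier inversion yields the representation $p_t(\mathbf{x})=(2\pi)^{-2}\int_{\mathbb R^2} e^{-i\mathbf{x}\cdot\xi}\Phi_t(\xi)\diff\xi$, which is automatically continuous and bounded by dominated convergence, and vanishes at infinity by the Riemann-Lebesgue lemma.

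The heart of the proof is thus the oscillatory-integral estimate $|\phi_t(\xi)|\lesssim |\xi|^{-1/2}$ as $|\xi|\to\infty$. A direct calculation with $u=t-x$ gives
\[
\gamma_t'(x)=\bigl(\beta u^2-1,\; \beta u(3-\beta u^2)\bigr)e^{-\beta u^2/2},
\]
which never vanishes (the two components share no common zero), and a similar computation shows $\det[\gamma_t'(x),\gamma_t''(x)]=\beta(3+\beta^2 u^4)e^{-\beta u^2}>0$. Hence the image of $\gamma_t$ is a smooth regular curve with strictly positive curvature. A standard Van der Corput / stationary-phase argument---splitting the $x$-integral at $|x|\le R(\xi)$ so that the Gaussian tail outside is negligible, and invoking nondegeneracy of the phase $x\mapsto\xi\cdot\gamma_t(x)$ inside (whenever $\xi\perp\gamma_t'(x_*)$, the curvature forces $\xi\cdot\gamma_t''(x_*)\ne 0$)---then yields the $|\xi|^{-1/2}$ decay. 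Changing variables $\eta=\xi/\sqrt n$ gives $\|\Phi_t\|_{L^1(\mathbb R^2)}=n\int |\phi_t(\eta)|^n \diff\eta<\infty$ whenever $n\ge 5$, which holds throughout our regime.

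The Kac-Rice prerequisites now follow almost for free. Condition 1 is immediate: $F_n$ is a finite sum of smooth Gaussians and is therefore surely in $\mathscr{C}^\infty(\mathbb{R})$, and the entries of $\Sigma_t$ from \Cref{fact:moments-p} decay like $e^{-t^2/2}$, so that $\sup_{t\in T}\mathrm{Var}\,F_n(t)$ and $\sup_{t\in T}\mathrm{Var}\,F_n'(t)$ are finite. For condition 2, the marginal density $p_t^{[1]}(x)=(2\pi)^{-1}\int_{\mathbb R}e^{-ix\xi_1}\Phi_t(\xi_1,0)\diff\xi_1$ inherits joint continuity in $(t,x)$ from the continuous dependence of $\gamma_t$ on $t$ together with an integrable envelope for $|\Phi_t(\xi_1,0)|$ obtained by an analogous 1D stationary-phase estimate (the relevant phase $\xi_1(t-x)e^{-\beta(t-x)^2/2}$ again has only nondegenerate critical points). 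The main obstacle is the stationary-phase step in the second paragraph, since the constants depend implicitly on $\beta$ and $t$; but only qualitative integrability of $\Phi_t$ is needed here and explicit $\beta$-tracking is deferred to \Cref{sec:error}.
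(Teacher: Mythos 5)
Your proposal is correct and follows the paper's strategy at the macro level: identify $\mathscr F\upmu_t$ as $\phi_t(\cdot/\sqrt n)^n$, prove $L^1$-integrability via a stationary-phase decay estimate $|\phi_t(\xi)|\lesssim|\xi|^{-1/2}$, then conclude via Fourier inversion and Riemann--Lebesgue, with joint continuity in $(t,\mathbf x)$ from dominated convergence. Where you depart from the paper is the key stationary-phase step, and your version is in fact cleaner. The paper separates the directions of $\xi$ into three regimes ($\xi_1\gg\xi_2$, $\xi_2\gg\xi_1$, $\xi_1\sim\xi_2$) and treats the phase perturbatively in each, using the non-degeneracy of the critical points of $g$, $g'$, and $g+g'$ respectively; this requires $\varepsilon$-localization to avoid the critical points and is phrased somewhat informally. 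You instead observe once and for all that the curve $\gamma_t$ satisfies $\det[\gamma_t',\gamma_t'']=\beta(3+\beta^2u^4)e^{-\beta u^2}>0$ pointwise, which directly forces $\xi\cdot\gamma_t''(x_*)\neq 0$ at any stationary point $x_*$ of $\xi\cdot\gamma_t$, uniformly over all directions of $\xi$ at once. This is the standard ``curve with nonvanishing torsion'' criterion and removes the casework entirely. Your method also handles the tail more transparently: since the phase derivative $\xi\cdot\gamma_t'(x)$ is (up to the factor $e^{-\beta u^2/2}$) a cubic polynomial in $u$, the stationary points are at most three, and the Gaussian weight $e^{-x^2/2}$ kills the region $|x|>R(\xi)$ where the curvature determinant degrades exponentially---a point the paper elides by only considering $|x|\le\varepsilon$ without justifying that no stationary point can lie outside. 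One minor correction to the paper you implicitly make: $|\phi_t|^n\lesssim|\xi|^{-n/2}$ is integrable on $\mathbb R^2$ precisely when $n\ge5$ (not $n\ge 4$ as the paper states), which is of course immaterial in the asymptotic regime.
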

}

We defer the proof to \Cref{sec: proof.pt.bdd}.
With \Cref{lem: pt.bdd}, 
we deduce

\begin{lemma} \label{lem:kr-appl}
With the notation as in \cref{thm:kac-rice},
\begin{equation}
\mb{E}U_0\left(F_n, T\right) = \int_T\int_0^\infty yp_t(0, y)\diff y\diff t.
\end{equation}
\end{lemma}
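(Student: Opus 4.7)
The statement is a direct application of the Kac-Rice formula (\Cref{thm:kac-rice}) to the random function $\Psi(t) = F_n(t)$ with threshold $u=0$ on the compact interval $T$. The plan is therefore to verify all four hypotheses of \Cref{thm:kac-rice} for $F_n$, and then read off the claimed identity from \eqref{eq:krf}.

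First, I would dispose of hypothesis~1. From the explicit formula \eqref{eq:Fn}, $F_n$ is a finite linear combination of smooth functions $(t-X_i)e^{-\beta(t-X_i)^2/2}$, hence almost surely in $\mathscr{C}^\infty(\mb{R})$. The finite-variance condition over $T$ follows at once from \Cref{fact:moments-p}, which gives explicit (finite) expressions for $\on{Var} F_n(t)$ and $\on{Var} F_n'(t)$ that are uniformly bounded on the compact set $T$. Hypotheses~2 and~3 are precisely the content of \Cref{lem: pt.bdd}: that proposition supplies both the marginal density $p_t^{[1]}$ of $F_n(t)$ (continuous at $x=0$) and the joint density $p_t(x,y)$ of $(F_n(t), F_n'(t))$ (continuous on $\mb{R}^2$, vanishing at infinity). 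The only subtlety is that condition~3 also requires joint continuity in $t$; I would argue this by noting that the characteristic function of $(F_n(t), F_n'(t))$ depends smoothly on $t$ (it is an $n$-fold product of smooth functions of $t$ arising from the $N(0,1)$ moment generating data), and combining this smoothness with the integrability/decay estimates used to invert the Fourier transform in the proof of \Cref{lem: pt.bdd} to get joint continuity of $(t,x,y)\mapsto p_t(x,y)$. Hypothesis~4 is simply \Cref{ass: assumption.1} by assumption.

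With all four hypotheses verified, \Cref{thm:kac-rice} yields
\[
\mb{E}\, U_0(F_n, T) = \int_T\int_0^\infty y\, p_t(0,y)\, \diff y \, \diff t,
\]
which is exactly the claim.

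I do not anticipate any genuine obstacle here, since Propositions \ref{lem: pt.bdd} and \Cref{ass: assumption.1} have been engineered precisely to discharge the hypotheses of \Cref{thm:kac-rice}; the main bookkeeping step is upgrading the fixed-$t$ continuity of $p_t$ provided by \Cref{lem: pt.bdd} to joint continuity in $(t,x,y)$. This is the one place where I would spend a little care, invoking the uniform-in-$t$ decay of the characteristic function to justify differentiation/continuity under the Fourier inversion integral used to construct $p_t$.
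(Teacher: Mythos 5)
Your proposal is correct and follows essentially the same route as the paper: discharge the hypotheses of \Cref{thm:kac-rice} using \Cref{lem: pt.bdd} for conditions 1--3 (including the joint-continuity-in-$(t,\mathbf{x})$ point, which the paper handles at the end of the proof of \Cref{lem: pt.bdd} via Fourier inversion and dominated convergence — the same device you sketch) and \Cref{ass: assumption.1} for condition 4, then read off \eqref{eq:krf}. The only cosmetic difference is that you unpack condition 1 explicitly from \eqref{eq:Fn} and \Cref{fact:moments-p}, which the paper folds into \Cref{lem: pt.bdd}.
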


\begin{proof}[Proof of \cref{prop:main-int}]
We decompose the Kac-Rice integral as follows:
\edit{\begin{align*}
\mb{E}U_0\left(F_n, T\right) & = \int_T\int_0^\infty yp_t(0, y)\diff y\diff t
\\ & = \int_T\int_0^\infty \left(\det\Sigma_t\right)^{-\frac12}y q_t\left(\Sigma_{t}^{-\frac12}[(0, y)-\mu_t]\right)\diff y\diff t
\\ & = \int_T\int_0^\infty \left(\det\Sigma_t\right)^{-\frac12}y\varphi\left(\Sigma_{t}^{-\frac12}[(0, y)-\mu_t]\right)\diff y\diff t
\\ & \quad + \int_T\int_0^{\Delta_t} \left(\det\Sigma_t\right)^{-\frac12}y\left[q_t-\varphi\right]\left(\Sigma_{t}^{-\frac12}[(0, y)-\mu_t]\right)\diff y\diff t
\\ & \quad + \int_T\int_{\Delta_t}^\infty \left(n\det\Sigma_t\right)^{-\frac12}y\psi\left(\Sigma_{t}^{-\frac12}[(0, y)-\mu_t]\right)\diff y\diff t
\\ & \quad + \int_T\int_{\Delta_t}^\infty \left(\det\Sigma_t\right)^{-\frac12}y\left[q_t-\varphi-n^{-\frac12}\psi\right]\left(\Sigma_{t}^{-\frac12}[(0, y)-\mu_t]\right)\diff y\diff t
\\ & \asymp \sqrt{\beta\log \beta}
\end{align*}

Now, by \cref{lem:main-int-phi}, the first summand is $\Theta\left(\sqrt{\beta\log \beta}\right)$, while the last three are $O\left(e^{-\frac{\omega(\beta)}{4}}\sqrt{\beta\log \beta}\right)$ by \cref{lem:error-3,cor:error-higher}. 
Replacing $T$ by $T'$, all four summands are $O(\sqrt{\beta})$, as desired.}
\end{proof}

\begin{figure}[!ht]
    \centering
    \includegraphics[scale=0.3]{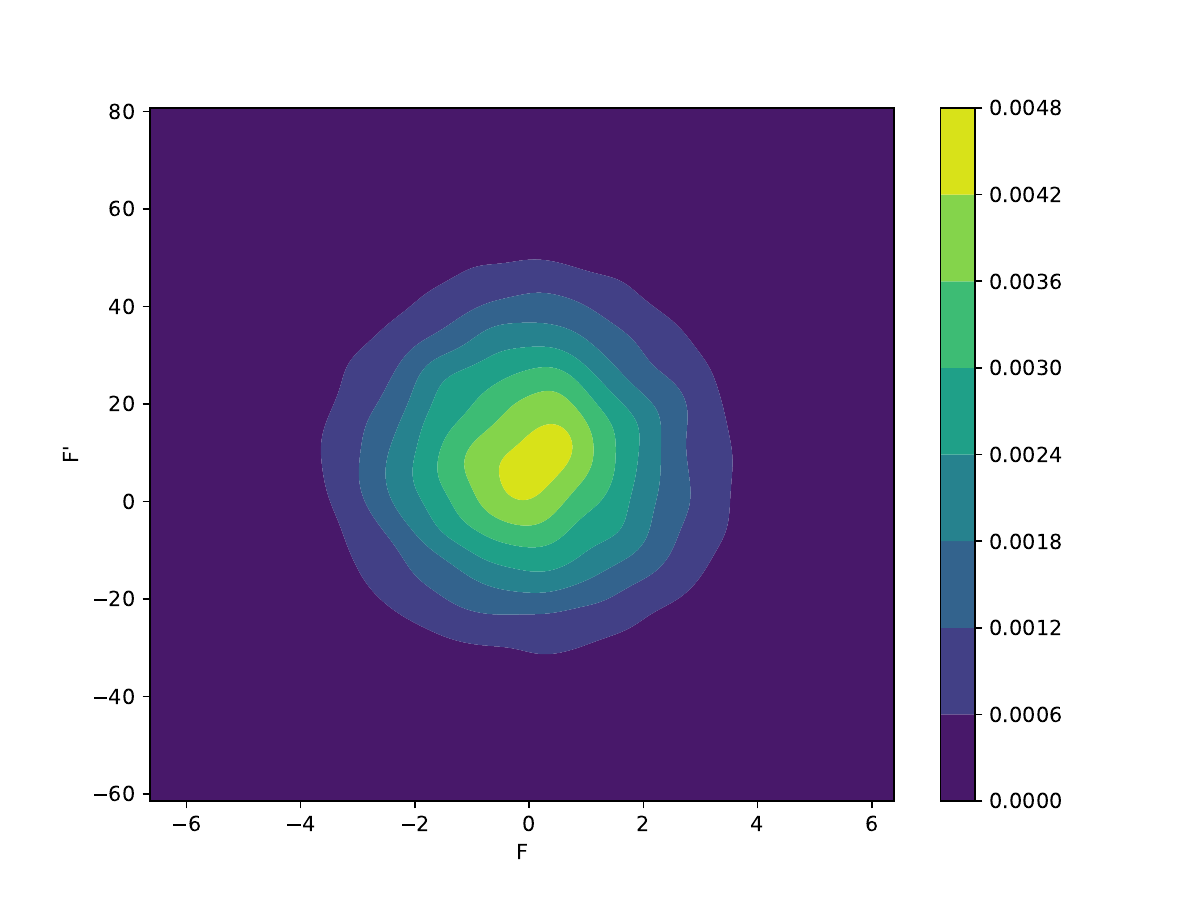}
    \includegraphics[scale=0.3]{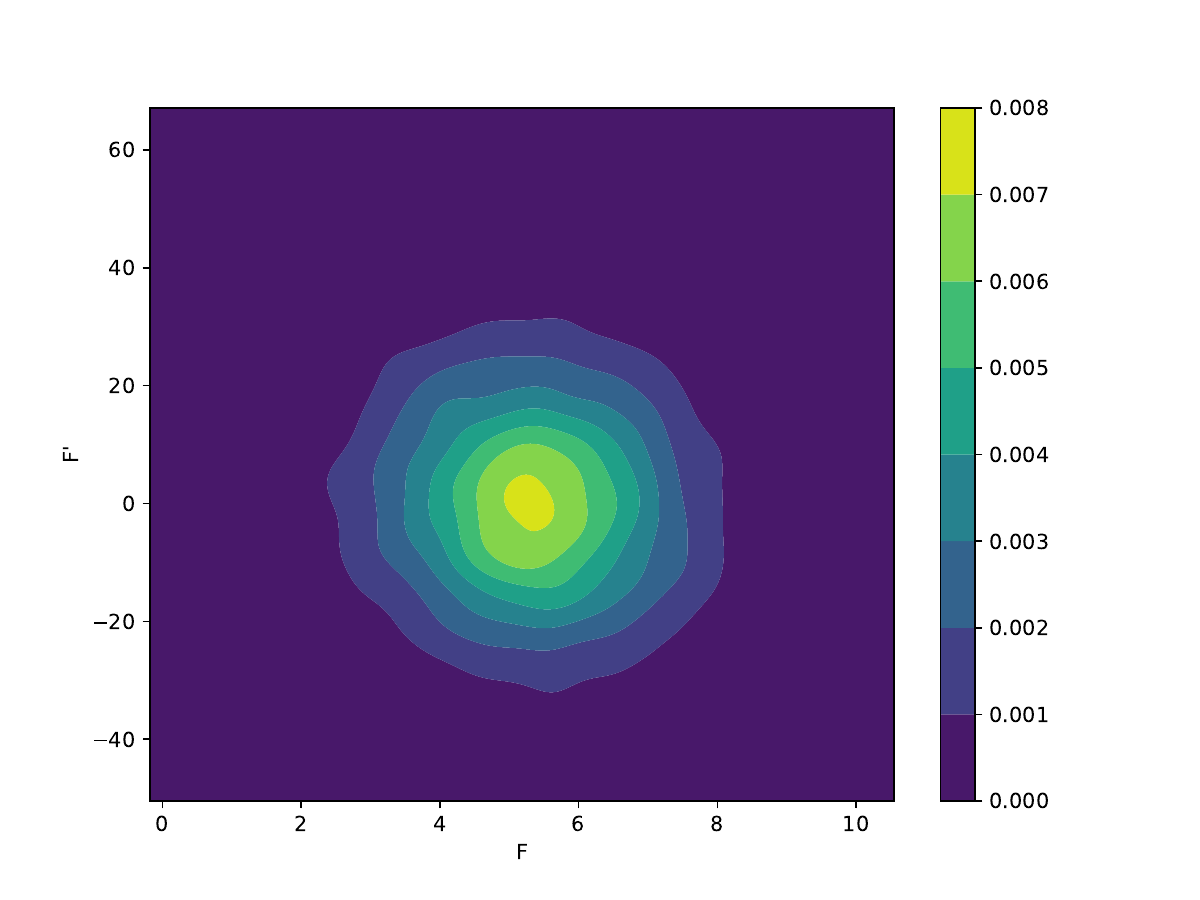}
    \includegraphics[scale=0.3]{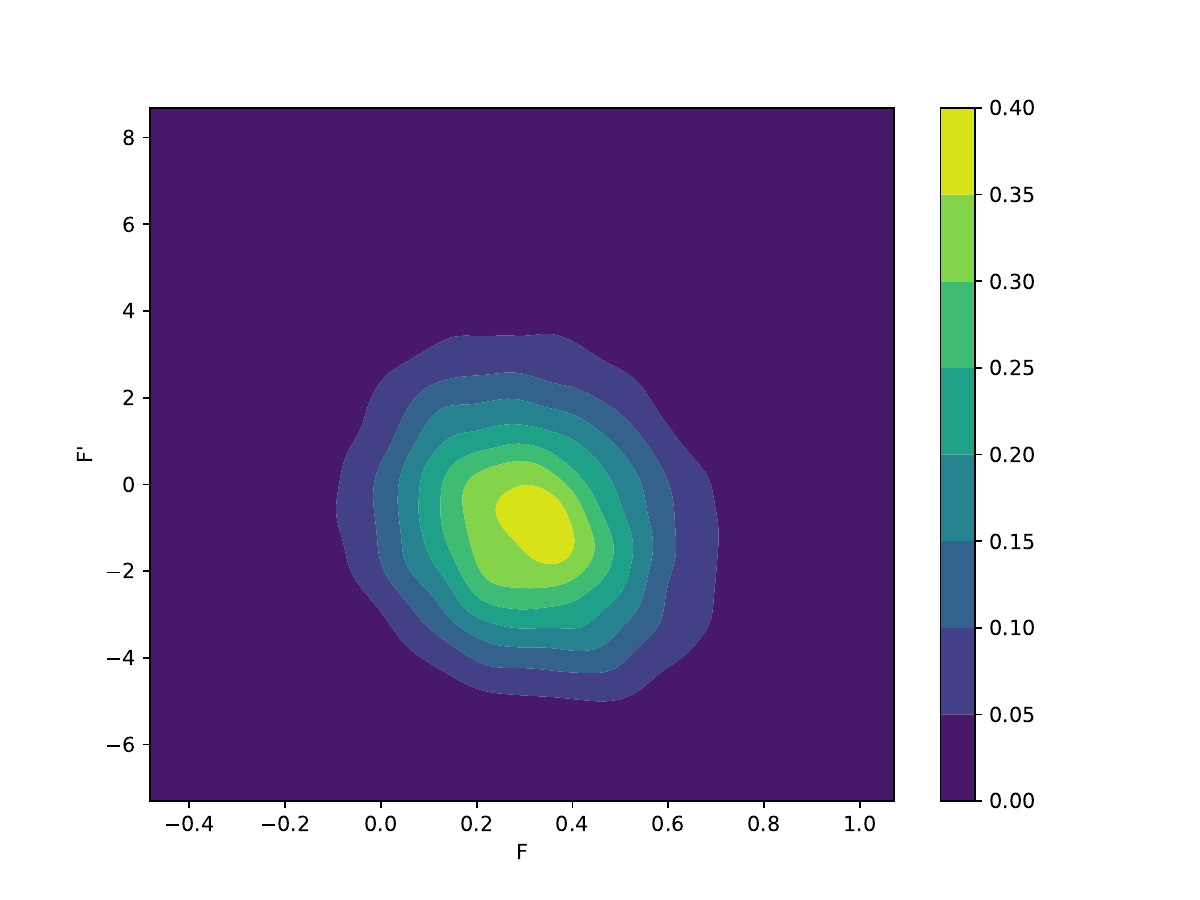}
    \includegraphics[scale=0.3]{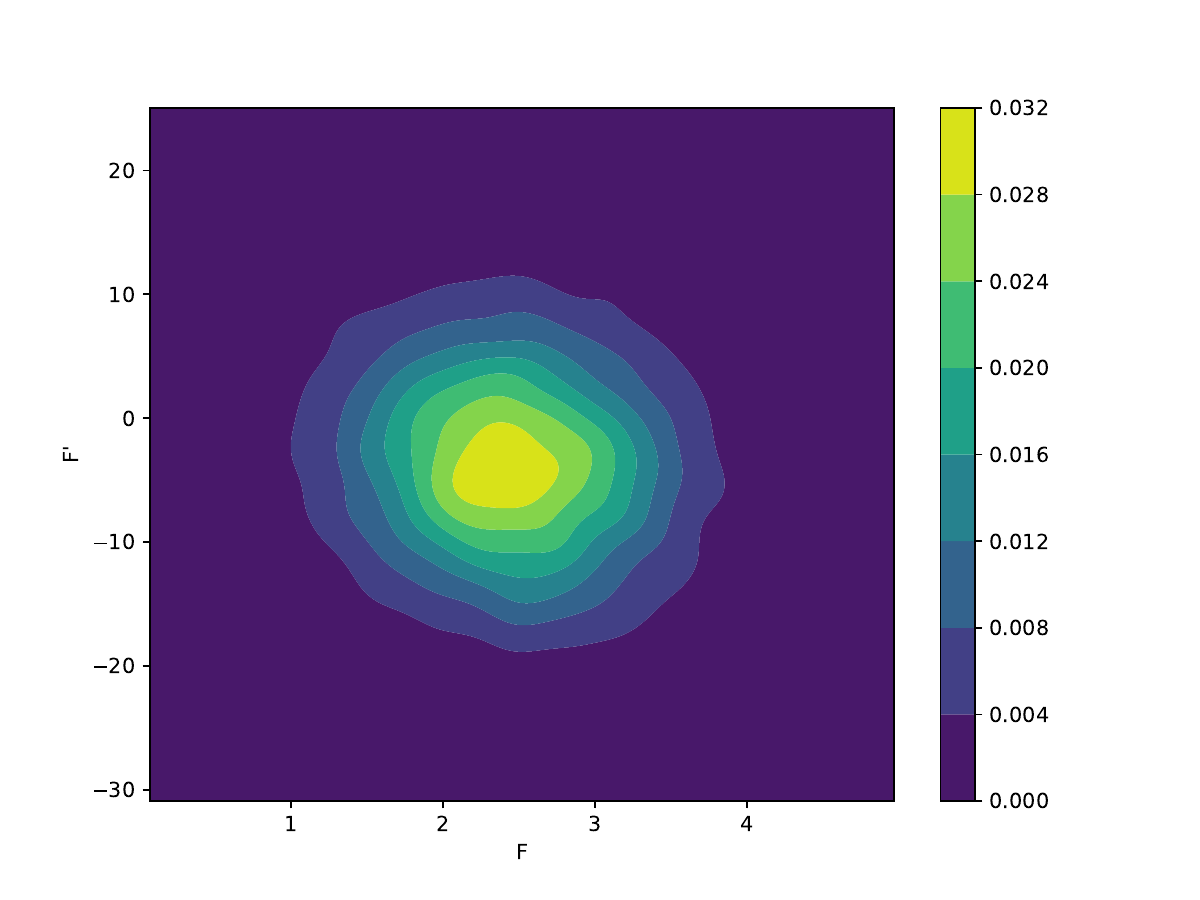}
    \caption{An estimate of the density $p_t=p_t(x,y)$ of $(F_n(t), F_n'(t))$ for $t=0, 1, 2, 3$ (clockwise from top left), where $\beta=81$ and $n=6500$, so that $\sqrt{2\log n -\log \beta}\approx 3$. (Code available at \href{https://github.com/KimiSun18/2024-gauss-kde-attention}{\color{blue}github.com/KimiSun18/2024-gauss-kde-attention}.)}
    \label{fig: kimi}
\end{figure}

\subsection{\texorpdfstring{Proof of \cref{prop:main-tail}}{Proof of}}
\label{sec:tail}

In this section, we prove \cref{prop:main-tail}.
\begin{lemma}\label{lem:scale-space}
For any $a>0$ and $X_1, \dots , X_n\in \mb{R}$, the number of modes of $\wh{P}_n$ in $(a, \infty)$ is at most $|I|$ where $I=\{i\in [n]:X_i\ge a\}$. \edit{By symmetry, the same estimate holds for modes in $(-\infty, -a)$.}
\end{lemma}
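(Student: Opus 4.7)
The plan is to adapt the global scale-space / exponential-polynomial argument of Carreira-Perpi\~n\'an--Williams~\cite{carreira2003number}---which shows that a mixture of $n$ equal-bandwidth Gaussians has at most $n$ modes on $\mathbb{R}$---to the half-line $(a,\infty)$. I would split
\[
\wh{P}_n = P_L + P_R,\quad P_L(t):=\tfrac{\sqrt{\beta}}{n\sqrt{2\pi}}\sum_{i\notin I}e^{-\beta(t-X_i)^2/2},\quad P_R(t):=\tfrac{\sqrt{\beta}}{n\sqrt{2\pi}}\sum_{i\in I}e^{-\beta(t-X_i)^2/2},
\]
isolating the contributions of the samples $X_i\ge a$ (the index set $I$) from those with $X_i<a$.

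The elementary observation is that on $(a,\infty)$, each summand of $P_L$ is strictly decreasing in $t$ (since $t>a>X_i$ lies past the peak of the individual Gaussian), so $P_L'(t)<0$ pointwise on $(a,\infty)$. Consequently any critical point $t^*\in(a,\infty)$ of $\wh{P}_n$ satisfies $P_R'(t^*)=-P_L'(t^*)>0$, so critical points of $\wh{P}_n$ in $(a,\infty)$ are confined to the ascending intervals of $P_R$. By Carreira-Perpi\~n\'an--Williams applied to the $|I|$-component mixture $P_R$, the open set $\{P_R'>0\}$ has at most $|I|$ connected components in $\mathbb{R}$. It then remains to show that each such ascending interval $J$ of $P_R$ contains at most one mode of $\wh{P}_n$: I would obtain this via a P\'olya-type sign-change count on the exponential polynomial $e^{\beta t^2/2}\wh{P}_n'(t)=\mathrm{const}\cdot\sum_i (t-X_i)e^{\beta X_i t - \beta X_i^2/2}$, using that subtracting the strictly negative $P_L'$ from $P_R'$ only depresses it pointwise and does not introduce any extra $+\to-$ crossing beyond the single one inherited from the ascending interval $J$. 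Equivalently, one can invoke the causality / variation-diminishing property of 1D Gaussian scale space, viewing $\wh{P}_n$ as the heat-equation evolution at time $\tau=(2\beta)^{-1}$ of the empirical measure $\tfrac{1}{n}\sum_i\delta_{X_i}$; this is the scale-space theory argument alluded to in the preceding paragraph of the paper.

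The main obstacle is precisely this localization from $\mathbb{R}$ to the half-line $(a,\infty)$: the global sign-change bounds (P\'olya / CP-W) apply on all of $\mathbb{R}$ and do not localize for free, so one must exploit the Gaussian-specific strict monotone decay of $P_L$ past $a$ to carry out the cutoff. This is consistent with the paper's remark that the argument ``crucially relies on the kernel density estimator being Gaussian,'' and would fail for more general kernels whose tails do not remain monotone past $a$.
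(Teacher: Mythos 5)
Your decomposition $\wh{P}_n = P_L + P_R$, the observation that $P_L'<0$ on $(a,\infty)$, and the remark that the argument is Gaussian-specific all match the paper's setup. However, the route you take from there diverges from the paper's and has a genuine gap. The paper's proof is inductive: $P_L$ has no modes in $(a,\infty)$, and one then adds the $|I|$ Gaussian bumps $g_i$, $i\in I$, one at a time, invoking the scale-space/Silverman fact (cited as \cite[Thm.~2]{carreira2003number}) that each single added Gaussian can create at most one new mode in $(a,\infty)$. You instead try a static count: bound the number of ascending intervals of $P_R$ by $|I|$ (this part is fine) and then assert that each such interval $J$ contains at most one mode of $\wh{P}_n$.

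That last step is where the proposal breaks down, and the justification offered does not support it. Saying that ``subtracting the strictly negative $P_L'$ from $P_R'$ only depresses it pointwise and does not introduce any extra $+\to-$ crossing'' is not correct as a matter of principle: subtracting one positive function from another can create many sign changes. Concretely, on an ascending interval $J$ of $P_R$ the function $P_R'$ need not be monotone, and $|P_L'|$ (a sum of right tails of Gaussian derivatives) need not be monotone either, so $\wh{P}_n'=P_R'-|P_L'|$ can, a priori, change sign several times inside a single $J$, producing more than one mode there. The P\'olya-type sign-change count you sketch would control the \emph{total} number of zeros of $\wh{P}_n'$ on $\mathbb{R}$, not the number per component of $\{P_R'>0\}$, so it does not rescue the per-interval claim. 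Your closing remark about invoking causality of 1D Gaussian scale space is the right instinct and is essentially the paper's actual mechanism, but it is a different argument from the ascending-interval one (one tracks critical-point trajectories as the bandwidth shrinks, or equivalently adds the $g_i$, $i\in I$, one by one and uses that each addition creates at most one new mode in $(a,\infty)$), and you do not carry it out. To fix the proof you should abandon the ``one mode per ascending interval of $P_R$'' claim and instead run the inductive/scale-space argument.
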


\begin{proof}[Proof of \Cref{lem:scale-space}]
Note that $\wh{P}_n (t) =\sum_{i=1}^n g_i(t)$ where for $i\in [n]$ we define
\begin{equation} g_i(t)\coloneqq\sqrt{\frac{\beta}{2\pi n^2}} \mathsf{K}_{\beta^{-1/2}}\left(t-X_i\right).\end{equation}
For $i\not\in I$, $g_i$ is monotonically decreasing on $[X_i, \infty)\supset (a, \infty)$, so
$\sum_{i\not\in I}g_i(t)$ has no modes in $(a, \infty)$. To this Gaussian mixture, we add in $g_i(t)$ for $i\in I$ one-by-one. By \cite[Theorem 2]{carreira2003number}, each time the number of modes in $(a, \infty)$ increases by at most one. In $|I|$-many steps, there are at most $|I|$ such modes.
\end{proof}
\begin{remark}
Two remarks are in order:
\label{rmk:gkde}
\begin{itemize}
    \item \edit{As discussed in \cite{carreira2003number}, the scale-space property of the Gaussians allow us to view adding a component to the Gaussian mixture as adding a delta distribution to the mixture, which adds one mode, and applying a Gaussian blurring that does not create new modes.}
    \item This argument crucially relies on the KDE being Gaussian: as discussed in \cite{carreira2003number}, the Gaussian kernel is the only kernel where for any fixed samples the number of modes of the KDE is non-increasing in the bandwidth $h$, \edit{which enables the blurring step}. For other kernels, we do suspect the analog of \cref{lem:scale-space} to hold, but a different argument is needed. In particular, \cite{mammen91,mammen95,mammen97} avoids this problem by counting modes on compact sets.
\end{itemize}
\end{remark}
\begin{proof}[Proof of \cref{prop:main-tail}]
By \cref{lem:scale-space}, symmetry of $T$ in \cref{eq:T} around $t=0$, linearity of expectations, and the tail bound $\mb{P}(|X|\ge a)\le 2e^{-a^2/2}$ for $X\sim N(0,1)$
\begin{equation} 
\begin{aligned}
    \mb{E}U_0\left(F_n, \mb{R}\setminus T\right) & \le \mb{E}\left|\{i:X_i\not\in T\} \right| 
    \\ & = n\mb{P}\left(X\not\in T\right) 
    \\ &\le 2n\exp\left(-\frac{2\log n-\log\beta-\omega(\beta)}{2}\right)
    \\ & = 2\sqrt{\beta \exp(\omega(\beta))}
    \\ & \ll \sqrt{\beta\log\beta}
\end{aligned}
\end{equation}
by the definition of $\omega(\beta)$, proving \cref{prop:main-tail}.
\end{proof}
Having proven \cref{prop:main-int,prop:main-tail}, we conclude \cref{thm:main-result}.

\section{Additional proofs}
\subsection{\texorpdfstring{Proof of \Cref{lem:gaussian-int}}{Proof of}}
We frequently make use of the following standard exercise on Gaussian integrals and dominated convergence. 

\begin{lemma}
\label{lem:gaussian-int}
Let $\Gamma$ denote the Gamma function.
For any $\alpha >0$ and integer $n\ge 0$,
\[\int _{0}^{\infty }\edit{u}^{n}e^{-\alpha u^{2}}\diff u=\frac{1}{2}\Gamma\left(\frac{n+1}{2}\right) \alpha^{-\frac{n+1}{2}}.\]

\edit{Moreover, for a fixed $n$, as $\epsilon\to 0$
\[
 \int_0^\infty v^n e^{-(v-\epsilon)^2}dv \to \int_0^\infty v^n e^{-v^2}dv\quad\text{and}\quad
 \int_{-\infty}^\infty |v|^n e^{-(v-\epsilon)^2}dv \to 2\int_0^\infty v^n e^{-v^2}dv
\]}

\end{lemma}

\begin{proof}
\edit{
For the first display, by a change of variables $v=\alpha u^2$, we get
\[\int _{0}^{\infty }{u}^{n}e^{-\alpha u^{2}}\diff u=\frac{1}{2}\alpha^{-\frac{n+1}{2}} \int_0^\infty v^{\frac{n-1}{2}}e^{-v}\diff v\]
and we recognize the integral as the definition of the Gamma function. 

For the second display, we apply the dominated convergence theorem: clearly, the integrands converges pointwise as $\varepsilon\to 0$, and it is dominated by an integrable function that is $e2^n$ for $v\in [-2, 2]$ and $v^ne^{-v^2/4}$ for $v\ge 2$ as we may take $\varepsilon \le 1 \le |v|/2$.
}\end{proof}

\label{sec:proof-gaussian-int}
\subsection{\texorpdfstring{Proof of \Cref{lem:moments-p}}{Proof of}}

\label{sec:moments}

In this section we compute the first two moments of $(G, G')$ to prove \cref{lem:moments-p}. 
Note that if $n^c\lesssim \beta\lesssim n^{2-c}$ for some $c>0$, and for $t\in T$, then we have $\exp \Theta(t^2/\beta) \to 1$. This implies that exponentials in the moments are asymptotically $e^{-{t^2}/{2}}$.

We first compute $\mu_t$. Completing the square gives
\[ \frac{\beta}{2}z^2+\frac{1}{2}(z-t)^2 = \frac{\beta+1}{2}u^2+\frac{\beta t^2}{2(\beta+1)}\quad \text{where}\quad u = z-\frac{t}{\beta+1}.\] 
Hence, using \cref{lem:gaussian-int} we compute
{ \[\begin{aligned}\mb{E}G(t)  = \int_{-\infty}^\infty ze^{-\frac{\beta}{2}z^2}\diff\edit{\varphi(z-t)}& = \frac{1}{\sqrt{2\pi}}\int_{-\infty}^\infty  ze^{-\frac{\beta}{2} z^2 -\frac12 (z-t)^2}\diff z\\
& = \frac{e^{-\frac{\beta}{2(\beta+1)}t^2}}{\sqrt{2\pi}}\int_{-\infty}^\infty \left(u+\frac{t}{\beta+1}\right)e^{-\frac{\beta+1}{2}u^2}\diff u \\ 
& = \frac{e^{-\frac{\beta}{2(\beta+1)}t^2}}{\sqrt{2\pi}} \left(\frac{t}{\beta+1}\right) \frac{\sqrt{\pi}}{\left(\frac{\beta+1}{2}\right)^{\frac12}} \\ & = \frac{e^{-\frac{\beta}{2(\beta+1)}t^2}t}{(\beta+1)^{\frac32}},
\end{aligned}\]}
as well as
{ 
\[\begin{aligned}
\mb{E}G'(t) & = \int _{-\infty}^\infty (1-\beta z^2)z^{-\frac{\beta}{2}z^2}\diff\edit{\varphi(z-t)}
\\ 
& = \frac{1}{\sqrt{2\pi}}\int_{-\infty}^\infty \left(1-\beta z^2\right)e^{-\frac{\beta}{2}z^2-\frac{1}{2}(z-t)^2}\diff z\\
& =\frac{e^{-\frac{\beta}{2(\beta+1)}t^2}}{\sqrt{2\pi}}\int_{-\infty}^\infty \left[1-\beta\left(u+\frac{t}{\beta+1}\right)^2\right]e^{-\frac{\beta+1}{2}u^2}\diff u \\ 
& = \frac{e^{-\frac{\beta}{2(\beta+1)} t^2}}{\sqrt{2\pi}} \left[\left(1-\frac{\beta t^2}{(\beta+1)^2}\right)\frac{\sqrt{\pi}}{\left(\frac{\beta+1}{2}\right)^{\frac12}}-\frac{\beta\sqrt{\pi}}{2\left(\frac{\beta+1}{2}\right)^{\frac32}} \right]\\
&= \frac{e^{-\frac{\beta}{ 2(\beta+1)}t^2}}{(\beta+1)^{\frac52}}\left((\beta+1)^2-\beta t^2-\beta(1+\beta)\right)\\
&= \frac{e^{-\frac{\beta}{ 2(\beta+1)}t^2}}{(\beta+1)^{\frac52}}\left(1+\beta-\beta t^2\right).
\end{aligned}\]
}
From these computations, and the remark after \cref{lem:gaussian-int}, we readily obtain the asymptotics of $\mu_t$ as in \cref{lem:moments-p} upon multiplying by $\sqrt{n}$.

We now compute $\Sigma_t$. Completing the square gives
\[\edit{\beta} z^2+\frac{1}{2}(z-t)^2 =\frac{2\beta +1}{2}u^2+\frac{\beta t^2}{2\beta +1}\quad \text{where}\quad u = z-\frac{t}{2\beta+1}.\] 
Hence using \cref{lem:gaussian-int} we compute
{ 
\[ 
\begin{aligned}
\mb{E}G^2(t) & = \frac{1}{\sqrt{2\pi}}\int _{-\infty}^\infty z^2e^{-\beta z^2-\frac12(z-t)^2}\diff z \\ 
& = \frac{e^{-\frac{\beta}{(2\beta+1)}t^2}}{\sqrt{2\pi}}\int_{-\infty}^\infty \left(u+\frac{t}{2\beta+1}\right)^2e^{-\frac{1+2\beta}{2}u^2}\diff u \\ 
& = \frac{e^{-\frac{\beta}{(2\beta+1)}t^2}}{\sqrt{2\pi}}  \left[\left(\frac{t}{2\beta+1}\right)^2 \frac{\sqrt{\pi}}{\left(\frac{2\beta+1}{2}\right)^{\frac12}}+\frac{\sqrt{\pi}}{2\left(\frac{2\beta+1}{2}\right)^{\frac32}} \right] \\
& = \frac{e^{-\frac{\beta}{(2\beta+1)}t^2}}{(2\beta +1)^{\frac52}} \left(t^2+2\beta+1\right),
\end{aligned}
\]
}
as well as
{ 
\[
\begin{aligned}
\mb{E}\left[G(t)G'(t)\right] & = \frac{1}{\sqrt{2\pi}}\int _{-\infty}^\infty z(1-\beta z^2)e^{-\beta z^2-\frac12(z-t)^2}\diff z \\ 
& = \frac{e^{-\frac{\beta}{(2\beta+1)}t^2}}{\sqrt{2\pi}}\int_{-\infty}^\infty \left(u+\frac{t}{2\beta+1}-\beta\left(u+\frac{t}{2\beta+1}\right)^3\right)e^{-\frac{1+2\beta}{2}u^2}\diff u \\ 
& = \frac{e^{-\frac{\beta}{2\beta+1}t^2}}{\sqrt{2\pi}}  \left[\left(\frac{t}{2\beta+1}-\beta\left(\frac{t}{2\beta+1}\right)^3\right) \frac{\sqrt{\pi}}{\left(\frac{2\beta+1}{2}\right)^{\frac12}}-\left(\frac{3t\beta}{2\beta+1}\right)\frac{\sqrt{\pi}}{2\left(\frac{2\beta+1}{2}\right)^{\frac32}} \right] \\
& = \frac{e^{-\frac{\beta}{2\beta+1}t^2}}{(2\beta +1)^{\frac72}} \left[t(2\beta+1)^2 -\beta t^3 -3t\beta(2\beta+1)\right]\\
& = \frac{e^{-\frac{\beta}{2\beta+1}t^2}}{(2\beta +1)^{\frac72}} \left(-2\beta^2t+\beta t-\beta t^3+t\right),
\end{aligned} 
\]}
and, finally,
{ 
\[ 
\begin{aligned}
\mb{E}G'^2(t) & = \frac{1}{\sqrt{2\pi}}\int _{-\infty}^\infty (1-\beta z^2)^2e^{-\beta z^2-\frac12(z-t)^2}\diff z \\ 
& = \frac{e^{-\frac{\beta}{2\beta+1}t^2}}{\sqrt{2\pi}}\int_{-\infty}^\infty \left(1-\beta\left(u+\frac{t}{2\beta+1}\right)^2\right)^2e^{-\frac{1+2\beta}{2}u^2}\diff u \\ 
& = \frac{e^{-\frac{\beta}{2\beta+1}t^2}}{\sqrt{2\pi}}  \Bigg[\left(1-\frac{\beta t^2}{(2\beta+1)^2}\right)^2\frac{\sqrt{\pi}}{\left(\frac{2\beta+1}{2}\right)^{\frac12}} \\
&\hspace{2.75cm}+ \left(\frac{6\beta^2t^2}{(2\beta+1)^2}-2\beta\right) \frac{\sqrt{\pi}}{2\left(\frac{2\beta+1}{2}\right)^{\frac32}}+\beta^2\cdot \frac{3\sqrt{\pi}}{4\left(\frac{2\beta+1}{2}\right)^{\frac52}} \Bigg] \\
& = \frac{e^{-\frac{\beta}{2\beta+1}t^2}}{(2\beta +1)^{\frac92}} \Big[((2\beta+1)^2-\beta t^2)^2+(2\beta+1)6\beta^2t^2-2\beta(2\beta+1)^3+3\beta^2(2\beta+1)^2\Big]\\
& = \frac{e^{-\frac{\beta}{2\beta+1}t^2}}{(2\beta +1)^{\frac92}} \left(12\beta^4+4\beta^3(t^2+5)+\beta^2(t^4-2t^2+15)-2\beta (t^2-3)+1 \right). 
\end{aligned}\]
}
It is easy to check that entries of $\Sigma_t$ are asymptotically the corresponding second moments. Together, we readily obtain the asymptotics of $\Sigma_t$ as indicated in \cref{lem:moments-p}.\qed

\subsection{\texorpdfstring{Proof of \Cref{lem:eta}}{Proof of}}

In this section, we prove \cref{lem:eta} on cumulants of $Y = \Sigma_t^{-\frac12}\left(G-\mb{E}G, G'-\mb{E}G'\right)$. To upper bound, we do not need to track the leading coefficients to ensure that they do not vanish when we combine applications of \cref{lem:gaussian-int}. 
\edit{First, as $s$ is a constant (we only apply $s=2, 3$), cumulants of order $s$ are clearly $O(\eta_{s})$.}
To bound $\eta_{s}$, we recall $\Sigma_t^{-1}$ from the proof of \cref{lem:phi-t} and apply H\"{o}lder's inequality:
\edit{\begin{align*}
\eta_s & = \mb{E} \left[\Vert Y\Vert ^s\right]
\\ & \le \mb{E}\left\Vert\left(G-\mb{E}G, G'-\mb{E}G'\right)^\intercal \Sigma_t^{-1}\left(G-\mb{E}G, G'-\mb{E}G'\right)\right\Vert^{\frac{s}{2}}
\\ & \lesssim \beta^{\frac{s}{4}} e^{\frac{st^2}{4}} \mb{E}\left|\beta (G-\mb{E}G)^2+2t(G-\mb{E}G)(G'-\mb{E}G')+(G'-\mb{E}G')^2\right|^{\frac{s}{2}}
\\ & \lesssim \beta^{\frac{s}{4}} e^{\frac{st^2}{4}} \left(\beta^{\frac{s}{2}}\mb{E}|G|^s+\mb{E}|G'|^s\right)
\\ &\lesssim \beta^{\frac{s}{4}} e^{\frac{(s-2)t^2}{4}} \int_{0}^\infty h_s\left(|z|\right)e^{-\frac{3\beta+1}{2}\left(z-\frac{t}{3\beta+1}\right)^2}\diff z,
\end{align*}}
\edit{where $h_s(u)\coloneqq\beta^{\frac{s}{2}}u^s+(1+\beta u^2)^s$.
Note that the shift $t/(3\beta+1)\ll ((3\beta+1)/2)^{-1/2}$, so 
by linearity of integration, we may apply \cref{lem:gaussian-int} to bound the integral of each monomial $|z|^\ell$ by $O(\beta^{-(\ell+1)/2})$. By monotonicity of $h$ on $\mb{R}_{\ge 0}$ and since $t\in T$, for some constant $C>0$,
\[\eta_s  \lesssim \beta^{\frac{s-2}{4}} e^{\frac{(s-2)t^2}{4}} h\left(C\beta^{-\frac12}\right)
\lesssim  \beta^{\frac{s-2}{4}} e^{\frac{(s-2)t^2}{4}}
\]
upon noting $u\mapsto h(u/\sqrt{\beta})$ has constant coefficients. This proves \cref{lem:eta}.}\qed

\subsection{\texorpdfstring{Proof of \Cref{lem:error-higher}}{Proof of}}
\label{sec:pf-error-higher}
\edit{
Fix $n, \beta$ sufficiently large as well as $t$. Define for a multi-index $\alpha$ that $h(\mbf{x})=\mbf{x}^\alpha g_s(\mbf{x})$. Then, for $s=3$
\[
\mathscr{F}h(\mbf{z}) =\partial ^\alpha \left(\mathscr{F}(q_t)-2\pi\varphi-\sum_{k=0}^3 \frac{\varphi}{k!(3-k)!\sqrt{n}}\kappa_t^{(k, 3-k)} H^{(k, 3-k)}
 \right)(\mbf{z}),
\]
and for $s=2$ we have the same statement with the last summand omitted. Note that we use 
\begin{equation*}
    \mathscr{F}h(\mbf{z}) \coloneqq \int_{\mb{R}^2} e^{-\mathrm{i}\langle \mbf{x},\mbf{z}\rangle}h(\mbf{x})\diff \mbf{x}
\end{equation*}
to denote the Fourier transform of $h$. We also omit the dependence of $h$ on $s$ and $\alpha$ for brevity.
By Fourier inversion, it suffices to show that for any multi-index $\alpha$ with order $|\alpha|\le s$ that
\begin{equation}
\label{eq:higher-error-goal}
\left\vert h(\mbf{x})\right| = \left|\frac{1}{(2\pi)^{2}}\int_{\mb{R}^2} e^{-\mathrm{i}\langle \mbf{z}, \mbf{x}\rangle}\mathscr{F}{h}(\mbf{z})\diff \mbf{z}\right|\lesssim \int_{\mb{R}^2} \left|\mathscr{F}{h}(\mbf{z})\right|\diff \mbf{z} \lesssim n^{-\frac{s-1}{2}}\eta_{s+1}
\end{equation}
We apply \cite[Theorem 9.10]{bhattacharya2010normal}---which is not asymptotic and has explicit constants in $s$ only---so we may use it even though $q_t$ depends on $\beta$ to obtain that
\begin{equation}
\label{eq:br-9.10}
\left|\mathscr{F}{h}(\mbf{z})\right| \lesssim  n^{-\frac{s-1}{2}}\eta_{s+1} \Vert \mbf{z}\Vert^{O(1)}e^{-\frac{\Vert \mbf{z}\Vert^2}{4}}
\end{equation}
provided $\Vert \mbf{z}\Vert\le  a\sqrt{n}$ for some $a\asymp\eta_{s+1}^{-\frac{1}{s-1}}$. 
By \cref{lem:eta}, we have that
\begin{equation}
\label{eq:small-z}
\int_{\Vert \mbf{z}\Vert \le a\sqrt{n}}\left|\mathscr{F}{h}(\mbf{z})\right| \diff\mbf{z}\lesssim  n^{-\frac{s-1}{2}}\eta_{s+1}\int_{\mb{R}^2}\Vert \mbf{z}\Vert^{O(1)}e^{-\frac{\Vert\mbf{z}\Vert^2}{4}} \diff \mbf{z} \lesssim n^{-\frac{s-1}{2}}\eta_{s+1}.
\end{equation}
Recall that $q_t$ is the density of $n^{-\frac12}\sum_{i=1}^n Y_i$. Let $f$ denote the density of $W\coloneqq\sqrt{5}(Y_1+\dots +Y_5)$ which exists and is bounded by \cref{lem: pt.bdd}. Hence, $\msc{F}f\in L^1(\mb{R}^2)$ and 
\[ \varepsilon \coloneqq \sup_{\Vert \mbf{z}\Vert >a} \left|\mathscr{F}{f}(\mbf{z})\right| <1.
\]
Now, $q_t$ is the density of $\sqrt{n/5}$ times the sum of $n/5$ many i.i.d. copies of $W$, so by properties of the Fourier transform and the product rule,
\begin{equation}
\label{eq:big-z-exp}
\begin{aligned}
\int_{\Vert \mbf{z}\Vert > a\sqrt{n}} \left|\partial^\alpha \mathscr{F}{q_t}(\mbf{z})\right|\diff\mbf{z} & \lesssim \eta_{|\alpha|}n^{\frac{|\alpha|}{2}}\varepsilon^{n/5-|\alpha|-1}\int_{\mb{R}^2}\left|\mathscr{F}{f}\left(\frac{\mbf{z}}{\sqrt{n}}\right)\right| \diff\mbf{z}
\\ & \lesssim \left(n\beta e^{\frac{t^2}{2}}\right)^{O(1)} \varepsilon^{n/5-s-1}
\\ & \ll n^{-\frac{s-1}{2}}\eta_{s+1}
\end{aligned}
\end{equation}
for sufficiently large $n$. Finally, we bound similar to \cref{lem:error-3}:
\begin{equation}
\label{eq:big-phi-poly}
\int_{\Vert \mbf{z}\Vert > a\sqrt{n}} \left|\partial^\alpha\right|(\mbf{z})\diff\mbf{z}
 \lesssim \int_{{\Vert \mbf{z}\Vert > a\sqrt{n}}} \Vert \mbf{z}\Vert ^{O(1)}e^{-\frac{\Vert \mbf{z}\Vert^2}{2}}\diff\mbf{z}\ll n^{-\frac{s-1}{2}}\eta_{s+1}
\end{equation}
and for the $s=3$ we also have the additional term
\begin{equation}
\label{eq:big-z-poly}
\begin{aligned}
\int_{\Vert \mbf{z}\Vert > a\sqrt{n}} &\left|\partial^\alpha \sum_{k=0}^3\frac{\varphi}{k!(3-k)!\sqrt{n}} \kappa_t^{(k, 3-k)} H^{(k, 3-k)}\right|(\mbf{z})\diff\mbf{z}
\\
& \lesssim n^{-\frac12}\sum_{k=0}^3\kappa_{t}^{(k, 3-k)}\int_{{\Vert \mbf{z}\Vert > a\sqrt{n}}}\left|\partial^{\alpha}H^{(k, 3-k)}\varphi\right|(\mbf{z}) \diff\mbf{z}
\\ & \lesssim n^{-\frac12}\eta_3 \int_{{\Vert \mbf{z}\Vert > a\sqrt{n}}} \Vert \mbf{z}\Vert ^{O(1)}e^{-\frac{\Vert \mbf{z}\Vert^2}{2}}\diff\mbf{z}
\\ & \ll n^{-\frac{s-1}{2}}\eta_{s+1}
\end{aligned}
\end{equation}
Now, in the last step of both \cref{eq:big-phi-poly,eq:big-z-poly}, we use that the standard Gaussian integral outside the ball at the origin converges to zero exponentially quickly as radius $a\sqrt{n}\to\infty$ by \cref{eq:rate}, so in particular
\[ 
\int_{{\Vert \mbf{z}\Vert > a\sqrt{n}}} \Vert \mbf{z}\Vert ^{O(1)}e^{-\frac{\Vert \mbf{z}\Vert^2}{2}}\diff\mbf{z} \ll (a\sqrt{n})^{-(s-1)} \asymp n^{-\frac{s-1}{2}}\eta_{s+1}
\]
Combining \cref{eq:small-z,eq:big-z-exp,eq:big-phi-poly,eq:big-z-poly} proves \cref{eq:higher-error-goal} and hence \cref{lem:error-higher} for both $s=2$ and $s=3$ cases. \qed}

\subsection{\texorpdfstring{Proof of \Cref{lem: pt.bdd}}{Proof of}
} \label{sec: proof.pt.bdd}


Point {\it 1} in \Cref{thm:kac-rice} can readily be seen to hold because of the explicit form of both of the fields. 
Point {\it 4} also readily holds, since $F_n''$ is a Lipschitz function for every realization of $X_i$, as a sum of Lipschitz functions.
We focus on showing Point {\it 3}, the proof of which can be repeated essentially verbatim to deduce Point {\it 2}. 

\subsubsection*{Proof of Point {\it 3}}
Observe that $\upmu_t=\upnu_t^{\ast n}$, where $\upnu_t$ is the law of 
\begin{equation*} \label{eq: Gt-prime}
    \begin{bmatrix}
        G(t)\\ G'(t)
    \end{bmatrix} = \begin{bmatrix}
        g(Z)\\
        g'(Z)
    \end{bmatrix}
\end{equation*}
with $Z\sim N(t,1)$ and $g(z) = ze^{-{\beta} z^2/2}$.  
(Also, for $n=1$ we have $\upmu_t=\upnu_t$, and $\upnu_t$ cannot have a continuous density on $\mb{R}^2$, since both components of a drawn random vector $(G(t), G'(t))$ are functions of the same one-dimensional Gaussian random variable.) 

\edit{
We first show that $\mathscr{F}(\upnu_t^{\ast n}) = (\mathscr{F}\upnu_t)^n\in L^1(\mb{R}^2)$. We work with a fixed $t$ and, by translation invariance of the standard Gaussian, we may take $t=0$ without loss of generality.
Proving this would imply that $\upmu_t$ has a density $p_t\in\mathscr{C}^0(\mb{R}^2)$ satisfying $p_t(\mbf{x})\to0$ as $\|\mbf{x}\|\to\infty$ by virtue of Fourier inversion and the Riemann-Lebesgue lemma. 
We also perform computations as if $\upnu_t^{\ast n}$ were already a function, and all arguments can be justified by appealing to the framework of Schwarz distributions $\mathcal{S}'(\mb{R}^2)$ and duality. 

We write $\xi=(\xi_1,\xi_2)$ with $\|\xi\|=\rho$, and  $\omega=(\cos\theta,\sin\theta)$ so that $\xi=\rho\omega$. We then have 
\begin{equation*}
    \mathscr{F}\upnu_t(\xi)=\frac{1}{\sqrt{2\pi}}\int_{\mb{R}} e^{-\mathrm{i}\rho\phi_\theta(x)}e^{-\frac{x^2}{2}}\diff x
\end{equation*}
with phase $\phi_\theta(x)=\cos\theta g(x)+\sin\theta g'(x)$. We will show the uniform bound 
\begin{equation} \label{eq:uniform-decay}
\left|\mathscr F\nu_t(\xi)\right|\lesssim \frac{1}{\sqrt{1+\|\xi\|}},
\end{equation}
for all $\xi\in\mb{R}^2$, where the implicit constant depends only on $\beta$. To this end, we compute 
\begin{align*}
    g'(x)&=(1-\beta x^2)e^{-\beta\frac{x^2}{2}},\\
    g''(x)&=\beta x(\beta x^2-3)e^{-\beta\frac{x^2}{2}},\\
    g'''(x)&=\beta(-\beta^2x^4+6\beta x^2-3)e^{-\beta\frac{x^2}{2}}.
\end{align*}
Set $\psi(x)=(g(x), g'(x))$. We have 
\begin{equation*}
    \det (\psi'(x),\psi''(x)) = g'(x)g'''(x)-(g''(x))^2 = -\beta(\beta^2 x^4+3)e^{-\beta x^2}<0
\end{equation*}
for all $x.$ In particular the determinant never vanishes.
Observe that $\phi_\theta'(x)=\cos\theta g'(x)+\sin\theta g''(x)$, and any stationary point $x_0$ of $\phi_\theta$ satisfies $g'(x_0)=0$. If also $\phi''(x_0)=\cos\theta g''(x_0)+\sin\theta g'''(x_0)=0$, then $(\cos\theta, \sin\theta)$ would be a nontrivial vector orthogonal to both $(g'(x_0), g''(x_0))$ and $(g''(x_0), g'''(x_0))$, forcing $g'(x_0)g'''(x_0)-(g''(x_0))^2=0$, a contradiction. Hence all stationary points of $\phi_\theta$ are non-degenerate, uniformly in $\theta$.

Fix $R>0$ and a smooth cutoff function $\chi\in C_c^\infty(\mb{R})$ with $\chi\equiv1$ on $[-1,1]$, supported within $[-2,2]$. Then set $\chi_R(x)=\chi(x/R)$.
We write 
\begin{align*}
    \mathscr{F}\nu_t(\xi)&=\frac{1}{\sqrt{2\pi}}\int_{\mb{R}} e^{-\mathrm{i}\rho\phi_\theta(x)}\chi_R(x)e^{-\frac{x^2}{2}}\diff x + \frac{1}{\sqrt{2\pi}}\int_{\mb{R}} e^{-\mathrm{i}\rho\phi_\theta(x)}(1-\chi_R(x))e^{-\frac{x^2}{2}}\diff x \\
    &= I_1 + I_2.
\end{align*}
We have 
\begin{equation*}
    |I_2|\leq \int_{|x|\geq R} e^{-\frac{x^2}{2}}\leq \frac{1}{\sqrt{2\pi}}\frac{2}{R}e^{-\frac{R^2}{2}},
\end{equation*}
which, as $R$ is fixed, is smaller than $(1+\rho)^{-\frac12}$ whenever $\rho$ is large enough, uniformly in $\theta$. Concerning $I_1$, because $\phi'_\theta$ is analytic it only has finitely many zeros $x_1(\theta), \ldots, x_m(\theta)$ on $[-2R,2R]$ (with $m\leq 3$). They are all non-degenerate: $\phi''_\theta(x_j(\theta))\neq0$. Therefore there exist disjoint intervals $J_j(\theta)\subset[-2R,2R]$ around each $x_j(\theta)$ of radius $\delta>0$, and some $c_*=c_*(R,\beta,\delta)>0$ such that $|\phi''_\theta(x)|\geq c_*$ for all $x$ in the union of the intervals $J_j(\theta)$ and all $\theta\in\mathbb{S}^1$. 
On each $J_j(\theta)$, we may apply the method of stationary phase to find 
\begin{align*}
    \left|\int_{\cup_j J_j(\theta)} e^{-\mathrm{i}\rho\phi_\theta(x)}\chi_R(x)e^{-\frac{x^2}{2}}\diff x\right|&\leq \frac{C}{\sqrt{\rho}}\left(\left\|\chi_R(\cdot) e^{-\frac{(\cdot)^2}{2}}\right\|_{L^\infty(\mb{R})}+\left\|\left(\chi_R(\cdot) e^{-\frac{(\cdot)^2}{2}}\right)'\right\|_{L^\infty(\mb{R})}\right)\\
    &\leq \frac{C}{\sqrt{\rho}}\left(1+e^{-\frac12}+\frac{\|\chi'\|_{L^\infty(\mb{R})}}{R}\right),
\end{align*}
where $C=C(R,\beta,\delta)>0$ is independent of $\rho,\theta$. Now set $W(\theta)\coloneqq [-2R,2R]\setminus \cup_j J_j(\theta)$. 
By compactness and continuity, we again have $c_W\coloneqq \inf_{\theta\in\mb{S}^1}\inf_{x\in W(\theta)}|\phi_\theta'(x)|>0$. We now look to use the method of non-stationary phase: integration by parts gives 
\begin{align*}
    \left|\int_{W(\theta)} e^{-\mathrm{i}\rho\phi_\theta(x)}\chi_R(x)e^{-\frac{x^2}{2}}\diff x\right|\leq \frac{1}{\rho}\int_{W(\theta)} \left|\frac{\left(\chi_R e^{-\frac{(\cdot^2)}{2}}\right)'(x)}{\phi_\theta'(x)}-\frac{\chi_R(x)e^{-\frac{x^2}{2}}\phi_\theta''(x)}{(\phi_\theta'(x))^2}\right|\diff x\\
    \leq \frac{1}{\rho}\left(\frac{1}{c_W}\left\|\left(\chi_Re^{-\frac{(\cdot)^2}{2}}\right)'\right\|_{L^1(\mb{R})}+\frac{M}{c_W^2}\left\|\chi_R e^{-\frac{(\cdot)^2}{2}}\right\|_{L^1(\mb{R})}\right),
\end{align*}
where $M=\sup_{x\in [-2R,2R]}\sup_{ \theta\in\mathbb{S}^1}|\phi_\theta''(x)|<\infty$.
All in all, both bounds yield $|\mathscr{F}\nu_t(\xi)|\lesssim \|\xi\|^{-\frac12}$ for  $\rho\coloneqq\|\xi\|\geq1$ sufficiently large, which proves \eqref{eq:uniform-decay}.

We now have 
\begin{equation*}
\int_{\mb{R}^2}|\mathscr{F}\nu_t(\xi)|^n \diff \xi \lesssim \int_{\|\xi\|\leq 1} 1\diff\xi + \int_{\|\xi\|>1} |\xi|^{-\frac{n}{2}}\diff\xi,
\end{equation*}
which is finite as long as $n>4$. By Fourier inversion, we have the desired conclusion.
}

To deduce that $(t,\mbf{x})\mapsto p_t(\mbf{x})$ is continuous on $T\times\mb{R}^2$, we note that 
\begin{equation*}
    p_t(\mbf{x}) = \frac{1}{(2\pi)^2}\int_{\mb{R}^2} e^{\mathrm{i}\langle\mbf{x}, \mbf{z}\rangle}\int_{\mb{R}^n} \exp\left(-\mathrm{i}\left\langle\mbf{z}, \begin{bmatrix} \sum_{j=1}^n g(\xi_j)\\ \sum_{j=1}^n g'(\xi_j)\end{bmatrix}\right\rangle\right) \gamma_t(\xi_1)\cdots\gamma_t(\xi_n)\diff\xi\diff\mbf{z}.
\end{equation*}
We can conclude by the Lebesgue dominated convergence theorem.

\section{Concluding remarks}

We showed that the expected number of modes of a Gaussian KDE with bandwidth $\beta^{-\frac12}$ of $n\ge1$ samples drawn iid from $N(0, 1)$ is of order $\Theta(\sqrt{\beta\log \beta})$ for $n^c\lesssim \beta\lesssim n^{2-c}$, where $c>0$ is arbitrarily small. We also provide a precise picture of where the modes are located. 

\edit{The with high probability version of the statements and the question in the higher-dimensional case remains open: we conjecture the number of modes to be $\Theta(\sqrt{\beta^d\log \beta})$. We also raise the question for non-Gaussian densities as well as the case of the unit sphere $\mathbb{S}^{d-1}$ with uniformly distributed samples.}

\subsubsection*{Acknowledgments}
The authors would like to thank Enno Mammen for useful discussion and sharing important references. We also thank Dan Mikulincer for discussions on Gaussian approximation using Edgeworth expansions, Valeria Banica for comments on the method of stationary phase, and Alexander Zimin for providing \Cref{fig: 1}. We finally thank all the reviewers for their comments, which have greatly improved the quality of the paper.

\subsubsection*{Funding}
B.G. was supported by a Sorbonne Emergences grant, and a gift from Google.
P.R. was supported by NSF grants DMS-2022448, CCF-2106377, and a gift from Apple.
Y.S. was supported by the MIT UROP and MISTI France Programs.



\bibliographystyle{alpha}
\bibliography{refs}{}

    \bigskip

\begin{minipage}[t]{.5\textwidth}
{\footnotesize{\bf Borjan Geshkovski}\par
  Inria \&
  Laboratoire Jacques-Louis Lions\par
  Sorbonne Université\par
  4 Place Jussieu\par
  75005 Paris, France\par
 \par
  e-mail: \href{mailto:borjan.geshkovski@inria.fr}{\textcolor{blue}{\scriptsize borjan.geshkovski@inria.fr}}
  }
\end{minipage}
\begin{minipage}[t]{.5\textwidth}
  {\footnotesize{\bf Philippe Rigollet}\par
  Department of Mathematics\par
  Massachusetts Institute of Technology\par
  77 Massachusetts Ave\par
  Cambridge 02139 MA, United States\par
 \par
  e-mail: \href{mailto:blank}{\textcolor{blue}{\scriptsize rigollet@math.mit.edu}}
  }
\end{minipage}%

\begin{center}
\begin{minipage}[t]{.5\textwidth}
  {\footnotesize{\bf Yihang Sun}\par
  Department of Mathematics\par
  Stanford University\par
450 Jane Stanford Way Building 380\par
  Stanford, CA 94305, United States\par
 \par
  e-mail: \href{mailto:blank}{\textcolor{blue}{\scriptsize kimisun@stanford.edu}}
  }
\end{minipage}%
\end{center}

\end{document}